\newtheorem{thm}{Theorem}[section]
\newtheorem{pro}[thm]{Proposition}
\newtheorem{rmk}[thm]{Remark}
\newtheorem{defi}[thm]{Definition}
\numberwithin{equation}{section}
\newcommand{\be }{\begin{equation}}
\newcommand{\ee }{\end{equation}}
\newcommand{\br}[1]{   [ \cdot,    \cdot  ]   }
\newcommand {\emptycomment}[1]{}
\def\<{\langle}
\def\>{\rangle}
\def\c{\cdot}
\begin{document}

\title{\bf  The solenoidal Virasoro algebra and its simple weight modules}

\author{\bf B. Agrebaoui, W. Mhiri}
\author{{ Boujemaa Agrebaoui$^{1}$
 \footnote{E-mail: b.agreba@fss.rnu.tn}~ and \  Walid Mhiri$^{1}$ \footnote{E-mail: mhiriw1@gmail.com}
\
}\\
 \\
{\small 1.~University of Sfax, Faculty of Sciences Sfax,  BP
1171, 3038 Sfax, Tunisia}}

\maketitle

\begin{abstract}
Let $A_n=\mathbb{C}[t_i^{\pm1},~1\leq i\leq n]$ be the algebra of Laurent polynomials in $n$-variables.

 Let $\mu=(\mu_1,\ldots,\mu_n)$ be a generic vector in $\mathbb{C}^n$ and $\Gamma_{\mu}=\{\mu\cdot\alpha,\alpha\in \mathbb{Z}^n\}$ where
  $\mu\cdot\alpha=\displaystyle\sum_{i=1}^n\mu_i\alpha_i$ for $\alpha=(\alpha_1,\ldots,\alpha_n)\in \mathbb{Z}^n$. Denote by $d_\mu$ the vector field:
 $$d_\mu=\displaystyle\sum_{i=1}^n\mu_it_i\frac{d}{dt_i}.$$ In \cite{BiFu}, Y. Billig and V. Futorny introduce the solenoidal Lie algebra $\mathbf{W}(n)_{\mu}:=A_nd_\mu$, where the Lie structure is given by the commutators of vector fields.

In the first part of this paper, we study the universal central extension  of $\mathbf{W}(n)_{\mu}$. We obtain a rank $n$ Virasoro algebra  called the solenoidal Virasoro algebra $\mathbf{Vir}(n)_\mu$.

In the second part, we recall in the case of $\mathbf{Vir}(n)_\mu$, the well know Harich-Chandra modules for generalized Virasoro algebra studied in \cite{Su,Su1,LuZhao}.

In the third part, we construct irreducible highest and lowest $\mathbf{Vir}(n)_\mu$-modules using triangular decomposition given by lexicographic order on $\mathbb{Z}^{n}$. We prove that these modules are weight modules which have infinite dimensional weight spaces.
\end{abstract}

\textbf{Key words}: Virasoro algebra, solenoidal algebra, central extension, Harish-Chandra modules, cuspidal modules

\textbf{Mathematics Subject Classification} (2020): 17B10,17B20,17B68,17B86.

\numberwithin{equation}{section}

\tableofcontents

\section{Introduction}
Let $A_1=\mathbb{C}[t,t^{-1}]$ the algebra of Laurent polynomials in one variable. The Witt algebra  $\mathbf{W}(1):=Der(A_1)$ is the algebra of all derivations of the ring $A_1$ known also as the centerless Virasoro algebra. Let $e_i=t^{i+1}\frac{d}{dt},~i\in \mathbb{Z}$, be the canonical basis of $\mathbf{W}(1)$ with the Lie bracket generated by $[e_i,e_j]=(j-i)e_{i+j}$.
The universal central extension of $\mathbf{W}(1)$ is the well known Virasoro algebra $\mathbf{Vir}:=\mathbf{W}(1)\oplus\mathbb{C}C$ where the Lie bracket is given by: $$[e_i,e_j]=(j-i)e_{i+j}+\frac{i^3-i}{12}\delta_{i+j,0}C.$$

In 1981, V. Kac  \cite{K1,K2} consider some problems on infinite dimensional Lie algebras and their representations. In particular he study the representations of the Virasoro algebra $\mathbf{Vir}$ with finite dimensional weight spaces knows as Harish-Chandra modules. He conjectured that these modules  are either the highest weight modules, the lowest weight modules or the intermediate series modules known also as modules of tensor densities. In 1992, O. Mathieu \cite{Ma} using Chang's results with elegant fashion, he proves the Kac's conjecture.

Patera and Zassenhaus  in \cite{PZ} introduced the generalized Virasoro algebra $\mathbf{Vir}[M]$ for any
additive subgroup $M$ of $\mathbb{C}$. This Lie algebra can be obtained from $\mathbf{Vir}$ by replacing the index
group $\mathbb{Z}$ with $M$. If $M\simeq\mathbb{Z}^n$, then $\mathbf{Vir}[M]$ is called a rank $n$ Virasoro
algebra (or a higher rank Virasoro algebra if $n\geq 2$).
Representations for generalized Virasoro algebras $\mathbf{Vir}[M]$ have been studied by several
authors. Mazorchuk \cite{M} proved that all irreducible weight modules with finite dimensional
weight spaces over $\mathbf{Vir}[\mathbb{Q}]$ are intermediate series modules (where $\mathbb{Q}$ is the field of rational
numbers). In \cite{M1}, Mazorchuk determined the irreducibility of Verma modules with zero
central charge over higher rank Virasoro algebras. In \cite{HuWaZh}, Hu, Wang and Zhao obtained
a criterion for the irreducibility of Verma modules over the generalized Virasoro algebra
$\mathbf{Vir}[M]$ over an arbitrary field $\mathbb{F}$ of characteristic $0$ ($M$ is an additive subgroup of $\mathbb{F}$).
In \cite{SuZhao}, Y. Su and K. Zhao proved that irreducible weight modules with weight spaces of bounded dimension are
the intermediate series. In \cite{Su, Su1} Y.Su proved that the irreducible
Harish-Chandra modules over higher rank Virasoro algebras are divided into two classes: intermediate series modules, and \textbf{GHW} modules. In \cite{BeBill}, Y. Billig and K. Zhao constructed a new class of irreducible weight modules with finite dimensional weight spaces over some generalized
Virasoro algebras. In \cite{LuZhao}  R. Lu and K. Zhao prove that irreducible weight modules which are  not uniformly bounded in the classification in \cite{BeBill} are exactly \textbf{GHW} modules introduced in \cite{Su,Su1}.

Let $A_n=\mathbb{C}[t^{\pm 1}_1,\ldots t^{\pm 1}_n]$ be the algebra of Laurent polynomials in $n$ variables and let $\mathbf{W}(n):=Der(A_n)$ the Lie algebra of derivation on $A_n$ known also as the Lie algebra of polynomial vector fields on the torus or the rank $n$ Witt algebra. The Lie bracket of $\mathbf{W}(n)$ is given by commutators of vector fields. Recently in \cite{BiFu1}, Y. Billig and V. Futorny studied the Harish-Chandra modules of $\mathbf{W}(n)$.

Let $d_{\mu}:=\sum_{i=1}^n\mu_it_i\frac{\partial}{\partial t_i}$ where $\mu=(\mu_1,\ldots,\mu_n)\in \mathbb{C}^n$ is generic. In \cite{BiFu}, Y. Billig and V. Futorny introduced the so called solenoidal Lie algebra $\mathbf{W}(n)_\mu:=A_nd_{\mu}$ as Lie subalgebra of $\mathbf{W}(n)$.
Let $\Gamma_{\mu}:=\{\mu\cdot\alpha,~\alpha\in \mathbb{Z}^n\}$. It  is a lattice of  $(\mathbb{C},+)$ isomorphic to $\mathbb{Z}^n$.
The algebra $\mathbf{W}(n)_\mu$ is $\Gamma_{\mu}$-graded with respect to the CSA $\mathbb{C}d_{\mu}$.

In the first part of the present paper, we study the universal central extensions of $\mathbf{W}(n)_\mu$. We obtain a generalized Virasoro algebra. We denote this algebra by $\mathbf{Vir}(n)_\mu$ and should call it the solenoidal Virasoro algebra.  The main result of this paper is  Theorem \ref{Ext1}.

In the second part, we study Harish-Chandra modules of $\mathbf{Vir}(n)_\mu$.
First, we extend cuspidal modules $T_{\mu}(a,b)$ of $\mathbf{W}(n)_\mu$ studied by Y. Billig and V. Futorny in \cite{BiFu} to $\mathbf{Vir}(n)_\mu$-modules by letting the central charge acting by zero. Then following \cite{LuZhao}, we construct generalized Verma modules $\widetilde{V}(a,b,\Gamma_{\mu'})$ and we take their irreducible quotients $\overline{V}(a,b,\Gamma_{\mu'})$. As proved in \cite{LuZhao}, the modules $\overline{V}(a,b,\Gamma_{\mu'})$ are isomorphic to generalized highest weight modules (\textbf{GHW} modules) introduced and study in \cite{Su,Su1} . We end this part by  Theorem \ref{thmS} which is a classification of Harish-Chandra modules of $\mathbf{Vir}(n)_\mu$. It is a particular case of results in \cite{BeBill, LuZhao,Su,Su1} for generalized Virasoro algebras.

In the third part, we consider the order $\prec$ on $\Gamma_\mu$ induced by the lexicographic order $<_{lex}$ on $\mathbb{Z}^{n}$ as follows:
$$\mu.\alpha\prec \mu.\beta \hbox{ if and only if } \alpha<_{lex} \beta.$$
Let $\Gamma_\mu^+:=\{\mu\cdot\alpha| \overrightarrow{0}<_{lex}\alpha\}$ and $\Gamma_\mu^-:=\{\mu\cdot\alpha| \alpha<_{lex}\overrightarrow{0}\}$. Then $\mathbf{Vir}(n)_{\mu}$ has triangular decomposition:
$$\mathbf{Vir}(n)_{\mu}=(\mathbf{Vir}(n)_{\mu})_+\oplus(\mathbf{Vir}(n)_{\mu})_0\oplus (\mathbf{Vir}(n)_{\mu})_{-}$$
where, $(\mathbf{Vir}(n)_{\mu})_0=\mathbb{C}d_\mu\oplus \mathbb{C}c_\mu$ is the Cartan subalgebra and $(\mathbf{Vir}(n)_{\mu})_{\pm}=\displaystyle \oplus_{\mu.\alpha\in\Gamma_\mu^{\pm} }\mathbb{C}e_{\mu.\alpha}.$

We consider the Verma module associated to this triangular decomposition:
$$M(\lambda,c) = Ind^{\mathbf{Vir}(n)_\mu}_{\mathfrak{b}^+}\mathbb{C}_{\lambda,c}$$
where $\mathfrak{b}^+=(\mathbf{Vir}(n)_{\mu})_+\oplus(\mathbf{Vir}(n)_{\mu})_0$ and $\mathbb{C}_{\lambda,c}$ is a one dimensional $\mathfrak{b}^+$-module such that $(\mathbf{Vir}(n)_{\mu})_+$ acts by zero, $d_\mu$ acts by a multiplication by $\lambda$ and $c_\mu$ by a multiplication by $c$.

The Verma module $M(\lambda,c)$ has a maximal proper submodule $\overline{M(\lambda,c)}$ and the quotient $V(\lambda,c):=M(\lambda,c)/ \overline{M(\lambda,c)}$ is an irreducible highest weight module of $\mathbf{Vir}(n)_{\mu}$. We construct irreducible lowest weight modules with the same manner. These modules have infinite dimensional weight spaces.

This paper is arranged as follows.\textbf{ In Section 2}, we compute the universal central extension of the solenoidal Lie algebra $\mathbf{W}(n)_{\mu}$
  and we  introduce the solenoidal-Virasoro algebra $\mathbf{Vir}(n)_\mu$. \textbf{In section 3}, we study Harish-Chandra $\mathbf{Vir}(n)_\mu$-modules. \textbf{The section 4} is devoted to  irreducible weight modules having infinite dimensional weight spaces.

\section{The solenoidal Virasoro algebra}
Let $A_n=\mathbb{C}[t_{i}^{\pm1},~1\leq i\leq n]$ be the algebra of Laurent polynomials. For $\alpha=(\alpha_1,\ldots,\alpha_n)\in \mathbb{Z}^n$ and $\mu=(\mu_{1},\ldots,\mu_{n})\in\mathbb{C}^{n}$, let $|\alpha|:=\displaystyle\sum_{i=1}^n\alpha_i$ and $\mu\cdot\alpha:=\displaystyle\sum_{i=1}^n\mu_i\alpha_i$.
 Assume that $\mu$ is a generic vector that is $\mu\cdot\alpha\neq 0,\forall~ \alpha\in \mathbb{Z}^n\setminus\{\overrightarrow{0}\}$.

 Let  $\Gamma_{\mu}$ be the lattice of  $\mathbb{C}$ image of $\mathbb{Z}^{n}$ by the embedding map: $$\begin{array}{cc}
                                                                     \sigma_\mu:& \mathbb{Z}^{n}\longrightarrow  \mathbb{C} \\
                                                                      &\alpha\mapsto  \mu\cdot\alpha
                                                                      \end{array}$$
 Let $d_{\mu}:=\sum_{i=1}^{n}\mu_{i}D_{t_{i}},$ where $D_{t_{i}}=t_{i}\frac{\partial}{\partial t_{i}}$. The subalgebra  $\mathbf{W}(n)_{\mu}:=A_nd_{\mu}$
  of $\mathbf{W}(n)$ is introduced by Y. Billig and V. Futorny in \cite{BiFu} and is called the  solenoidal-Witt algebra.
  The canonical basis of $\mathbf{W}(n)_{\mu}$ is given by: $$\{e_{\mu\cdot\alpha}:=t^{\alpha}d_{\mu},\mu\cdot\alpha\in\Gamma_{\mu}\}.$$
 Its Lie algebra structure is generated by the commutators:
\begin{equation}\label{soloWitt}[e_{\mu\cdot\alpha},e_{\mu\cdot\beta}]=\mu\cdot(\beta-\alpha)e_{\mu\cdot(\alpha+\beta)},~~\mu\cdot\alpha,\mu\cdot\beta\in \Gamma_{\mu}.\end{equation}

If $n=1$,  $\Gamma_{\mu}=\mu\mathbb{Z}$ for some $\mu \in \mathbb{C}^*$. In this case the algebra $\mathbf{W}(n)_{\mu}$ is isomorphic to  the classical Witt algebra  $\mathbf{W}(1)$ associated to $\mu =1$.

In the following, we will compute the universal central extension of the solenoidal Lie algebra $\mathbf{W}(n)_{\mu}$. In the case $n=1$, the central extension is the well known Virasoro algebra (see for example the book of Kac and Raina, \cite{KaRe}).
\begin{thm}\label{Ext1} The solenoidal-Witt algebra $\mathbf{W}(n)_{\mu}$ has a one-dimensional universal central extension. It is generated by the $2$-cocycle:
$$\begin{array}{ll}
C_{\mu}:& \mathbf{W}(n)_{\mu}\times \mathbf{W}(n)_{\mu}\rightarrow \mathbb{C}\\&(e_{\mu\cdot\alpha},e_{\mu\cdot\beta})\mapsto\frac{(\mu\cdot\alpha)^{3}-\mu\cdot\alpha}{12}\delta_{\alpha,-\beta}c_\mu.
\end{array}.$$

The central extension  $\mathbf{Vir}(n)_{\mu}:=\mathbf{W}(n)_{\mu}\oplus \mathbb{C}c_{\mu}$, is called the solenoidal Virasoro algebra.

Its Lie structure
in the basis $\{e_{\mu\cdot\alpha}=t^{\alpha}d_{\mu},c_{\mu}|\mu\cdot\alpha\in\Gamma_{\mu}\}$, is generated by the following brackets:
\begin{equation} \label{seloVir}
\begin{array}{cc}
\displaystyle[e_{\mu\cdot\alpha},e_{\mu\cdot\beta}]_{SVir}=\mu\cdot(\beta-\alpha)e_{\mu\cdot(\alpha+\beta)}+\frac{(\mu\cdot\alpha)^{3}-\mu\cdot\alpha}{12}\delta_{\alpha,-\beta}c_{\mu},\\[8pt]
\displaystyle[\mathbf{Vir}(n)_{\mu},c_{\mu}]_{SVir}=0.\end{array}\end{equation}
\end{thm}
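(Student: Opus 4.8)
The plan is to compute $H^2(\mathbf{W}(n)_\mu, \mathbb{C})$ directly, since the universal central extension of a perfect Lie algebra is controlled by its second scalar cohomology. First I would check that $\mathbf{W}(n)_\mu$ is perfect: from the bracket \eqref{soloWitt}, $[e_{\mu\cdot\alpha}, e_{\mu\cdot\beta}] = \mu\cdot(\beta-\alpha)e_{\mu\cdot(\alpha+\beta)}$, and since $\mu$ is generic one can realize every basis vector $e_{\mu\cdot\gamma}$ as such a bracket (choosing $\alpha,\beta$ with $\alpha+\beta=\gamma$ and $\mu\cdot(\beta-\alpha)\neq 0$, which is possible because $\Gamma_\mu\cong\mathbb{Z}^n$ is torsion-free and rank $\geq 1$). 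Perfectness guarantees a \emph{unique} universal central extension, so it suffices to determine all $2$-cocycles modulo coboundaries.

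Next I would set up the cohomology computation. A $2$-cocycle is an alternating bilinear $\psi\colon \mathbf{W}(n)_\mu\times\mathbf{W}(n)_\mu\to\mathbb{C}$ satisfying the cocycle identity
\begin{equation}
\psi([x,y],z)+\psi([y,z],x)+\psi([z,x],y)=0.
\end{equation}
Writing $\psi_{\alpha,\beta}:=\psi(e_{\mu\cdot\alpha},e_{\mu\cdot\beta})$, the standard grading argument shows one may assume $\psi$ is supported on the diagonal $\alpha+\beta=\overrightarrow{0}$: since $\mathbb{C}d_\mu$ acts diagonally and $\mathbf{W}(n)_\mu$ is $\Gamma_\mu$-graded, any cocycle is cohomologous to one of degree zero, i.e. $\psi_{\alpha,\beta}=0$ unless $\mu\cdot\alpha+\mu\cdot\beta=0$, which by genericity forces $\beta=-\alpha$. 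So the cocycle is determined by a single function $f(\mu\cdot\alpha):=\psi_{\alpha,-\alpha}$, with $f$ odd in $\mu\cdot\alpha$ by antisymmetry.

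The heart of the argument is then to evaluate the cocycle identity on a carefully chosen triple to pin down $f$. Plugging $e_{\mu\cdot\alpha},e_{\mu\cdot\beta},e_{\mu\cdot\gamma}$ with $\alpha+\beta+\gamma=\overrightarrow{0}$ into the cocycle identity, using \eqref{soloWitt} to expand the brackets, and abbreviating $a:=\mu\cdot\alpha$, $b:=\mu\cdot\beta$ so that $\mu\cdot\gamma=-(a+b)$, I would obtain a functional equation of the form
\begin{equation}
(b-a)f(a+b)=(a+2b)f(a)-(2a+b)f(b).
\end{equation}
This is precisely the functional equation whose solution space (for $f$ odd) is two-dimensional, spanned by $f(x)=x$ and $f(x)=x^3$; the $x$-term is a coboundary (it comes from $\psi(x,y)=\phi([x,y])$ with $\phi$ a linear functional on $\mathbf{W}(n)_\mu$), while the genuine class is $x^3$. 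Normalizing gives exactly $f(\mu\cdot\alpha)=\tfrac{(\mu\cdot\alpha)^3-\mu\cdot\alpha}{12}$ up to scalar, so $H^2(\mathbf{W}(n)_\mu,\mathbb{C})$ is one-dimensional and the universal central extension is $\mathbf{Vir}(n)_\mu$ with the bracket \eqref{seloVir}. The main obstacle I anticipate is the functional-equation step: showing rigorously that the odd solutions are spanned by $x$ and $x^3$, which requires exploiting that $\Gamma_\mu$ contains enough independent values $a,b$ (guaranteed by genericity, $n\geq 1$) to force the polynomial structure, rather than admitting pathological additive solutions. The remaining verification that \eqref{seloVir} satisfies the Jacobi identity and that $c_\mu$ is central is routine from the bracket formula.
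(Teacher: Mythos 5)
You take essentially the same route as the paper: normalize the cocycle so that it is supported on the diagonal $\alpha+\beta=\overrightarrow{0}$ (your grading/coboundary argument is exactly the paper's change of basis $e'_{\mu\cdot\alpha}=e_{\mu\cdot\alpha}+\theta(0,\mu\cdot\alpha)(\mu\cdot\alpha)^{-1}c_\mu$ followed by the Jacobi identity with $e_{\mu\cdot 0}$), then pin down the diagonal values by a functional equation coming from the cocycle identity on triples with $\alpha+\beta+\gamma=\overrightarrow{0}$, and finally discard the linear part as a coboundary. Your preliminary observation that $\mathbf{W}(n)_{\mu}$ is perfect, so that a universal central extension exists and is detected by $H^2$, is correct and is a point the paper leaves implicit.

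Two caveats. First, a sign error: the cocycle identity gives $(a-b)f(a+b)=(a+2b)f(a)-(2a+b)f(b)$, which is the paper's equation (\ref{ident5}), not $(b-a)f(a+b)=(a+2b)f(a)-(2a+b)f(b)$ as you wrote; with your sign neither $f(x)=x$ nor $f(x)=x^3$ is a solution (for $f(x)=x$ the two sides are negatives of each other), so the display must be corrected before your claim about the solution space makes sense. Second, the step you defer as the ``main obstacle'' --- ruling out pathological additive solutions --- is indeed the real content, and it is also the weak point of the paper's own proof: the paper passes to the symmetrized equation (\ref{ident10}), which \emph{every} odd additive function satisfies, invokes Kannappan--Riedel--Sahoo to get $\eta(x)=ax^3+A(x)$ with $A$ additive, and then disposes of $A$ by declaring $\eta$ continuous, which is unjustified since $\eta$ is only defined on the countable group $\Gamma_\mu$, on which genuinely nonlinear additive maps $\Gamma_\mu\to\mathbb{C}$ exist once $n\geq 2$. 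Your setup closes this gap more cleanly, provided you keep the full cocycle identity (\ref{ident5}) rather than its symmetrization: substituting an odd additive $A$ into (\ref{ident5}) leaves $3\bigl(aA(b)-bA(a)\bigr)=0$ for all $a,b\in\Gamma_\mu$, which forces $A(x)=bx$. Hence the odd solutions of (\ref{ident5}) are exactly $ax^3+bx$, the linear part is the coboundary you describe, and $H^2(\mathbf{W}(n)_{\mu},\mathbb{C})$ is one-dimensional as claimed; so your architecture is sound and, once the sign is fixed, it actually yields a more self-contained argument than the paper's appeal to continuity.
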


\begin{proof}: The proof is divided in two part the existence and the unicity.

1) Let us first check  that the bilinear map: $C_{\mu}:\mathbf{W}(n)_{\mu}\times \mathbf{W}(n)_{\mu}\rightarrow \mathbb{C}c_\mu$ defined on the basis of $\mathbf{W}(n)_\mu$ by:$$C_{\mu}(e_{\mu\cdot\alpha},e_{\mu\cdot\beta})=\frac{(\mu\cdot\alpha)^{3}-(\mu\cdot\alpha)}{12}\delta_{\alpha,-\beta}c_\mu$$
satisfies the following $2$-cocycle condition:

\begin{equation}\label{CC} C_{\mu}(e_{\mu\cdot\alpha},[e_{\mu\cdot\kappa},e_{\mu\cdot\beta}])+C_{\mu}(e_{\mu\cdot\beta},[e_{\mu\cdot\alpha},e_{\mu\cdot\kappa}])+C_{\mu}(e_{\mu\cdot\kappa},[e_{\mu\cdot\beta},e_{\mu\cdot\alpha}])=0
.\end{equation}
 Let $\varphi_\mu(\alpha,\beta ,\kappa):=\mu\cdot(\beta-\kappa)\frac{(\mu\cdot\alpha)^3-\mu\cdot\alpha}{12}.$
 Then the right member of (\ref{CC}) becomes:
\begin{equation} \label{ECC} \Big(\varphi_\mu(\alpha,\beta ,\kappa)+\varphi_\mu(\beta ,\kappa,\alpha)+\varphi_\mu(\kappa,\alpha,\beta)\Big)\delta_{\alpha+\beta+\kappa,\overrightarrow{0}}c_\mu.\end{equation}

If  $\alpha+\kappa+\beta\neq \overrightarrow{0}$ then (\ref{ECC}) will be zero and the $2$-cocycle condition (\ref{CC}) is always satisfied.

Now, if $\alpha+\kappa+\beta=\overrightarrow{0}$ then $\delta_{\alpha+\beta+\kappa,\overrightarrow{0}}=1$.
If we replace $\kappa$ by $-\alpha-\beta$ in (\ref{ECC}) we obtain a polynomial equation in $\mu\cdot\alpha$ and $\mu\cdot\beta$. Now if we develop the expression using the linearity of $\alpha\mapsto\mu\cdot\alpha$, we will obtain zero and again the $2$-cocycle condition (\ref{CC}) is satisfied.

2) Now, let us prove the unicity.Assume that for $\mu\cdot\alpha,~\mu\cdot\beta\in \Gamma_\mu$, there exists $\theta(\mu\cdot\alpha,\mu\cdot\beta)\in \mathbb{C}$
such that:
\begin{equation}\label{bracketsolVir}[e_{\mu\cdot\alpha},e_{\mu\cdot\beta}]_{SVir}=\mu\cdot(\beta-\alpha)e_{\mu\cdot(\alpha+\beta)}+\theta(\mu\cdot\alpha,\mu\cdot\beta)c_\mu,[e_{\mu\cdot\alpha},c_\mu]_{SVir}=0\end{equation}

The function $\theta(\mu\cdot\alpha,\mu\cdot\beta)$ can not be chosen arbitrary because of the anti-commutativity of the bracket and of the Jacobi identity.
We observe from (\ref{bracketsolVir}) that if we put: $$e'_{\mu\cdot0}=e_{\mu\cdot0},e'_{\mu\cdot\alpha}=e_{\mu\cdot\alpha}+\frac{\theta(0,\mu\cdot\alpha)}{(\mu\cdot\alpha)}c_\mu,~~ (\alpha\neq\overrightarrow{0}),$$
then we will have $$[e'_{\mu\cdot0},e'_{\mu\cdot\alpha}]_{SVir}=(\mu\cdot\alpha)e'_{\mu\cdot\alpha},~~(\mu\cdot\alpha\in\Gamma_\mu).$$
This transformation is merely a change of basis and we can drop the prime and say that:
\begin{equation}\label{ident2}[e_{\mu\cdot0},e_{\mu\cdot\alpha}]_{SVir}=(\mu\cdot\alpha)e_{\mu\cdot\alpha},~~(\mu\cdot\alpha\in\Gamma_\mu)\end{equation}
From the Jacobi identity for $e_{\mu\cdot0},e_{\mu\cdot\alpha},e_{\mu\cdot\beta}$ we get
\begin{equation}\label{ident3}[e_{\mu.0},[e_{\mu\cdot\beta},e_{\mu\cdot\alpha}]_{SVir}]_{SVir}=\mu\cdot(\beta+\alpha)[e_{\mu\cdot\beta},e_{\mu\cdot\alpha}]_{SVir}\end{equation}
Substituting (\ref{bracketsolVir}) in (\ref{ident3}) and using (\ref{ident2}) we get: $$\mu.(\alpha+\beta)\theta(\mu\cdot\alpha,\mu\cdot\beta)c_\mu=0.$$
But this is equivalent to $\alpha+\beta =\overrightarrow{0}$ or $\theta(\mu\cdot\alpha,\mu\cdot\beta)=0$. Then $\theta$ has the following form: \begin{equation} \label{exptheta}\theta(\mu\cdot\alpha,\mu\cdot\beta)=\delta_{\alpha,-\beta}\eta(\mu\cdot\alpha)\end{equation}
 where $\eta$ is a function from $\Gamma_{\mu}$ to $\mathbb{C}$.

The Lie bracket  (\ref{bracketsolVir}) becomes:
\begin{equation}\label{ident4}[e_{\mu\cdot\alpha},e_{\mu\cdot\beta}]_{SVir}=(\mu.\beta-\mu\cdot\alpha)e_{\mu\cdot(\alpha+\beta)}+\delta_{\alpha,-\beta}\eta(\mu\cdot\alpha)c_\mu,~\mu\cdot\alpha,\mu\cdot\beta\in~\Gamma_\mu\end{equation}
By antisymmetry of the bracket, we deduce that $\eta$ is an odd function ($\eta(\mu\cdot\alpha)=-\eta(-\mu\cdot\alpha)$)and by bi-linearity of the bracket, we deduce that $\eta$ is additive and so, $\eta:(\Gamma_{\mu},+)\rightarrow (\mathbb{C},+)$ is a group morphism.

We now work out the Jacobi identity for $e_{\mu\cdot\kappa},e_{\mu\cdot\alpha},e_{\mu\cdot\beta}$ with $\kappa+\beta+\alpha=\overrightarrow{0}$. Using (\ref{ident4}) and the the fact that $\eta$ is odd, we get:
\begin{equation}\label{ident5}\mu\cdot(\alpha-\beta)\eta(\mu\cdot(\alpha+\beta))-\mu\cdot(2\beta+\alpha)\eta(\mu\cdot\alpha)+\mu\cdot(\beta+2\alpha)\eta(\mu\cdot\beta)=0\end{equation}
where $\eta$ is a continuous function.
Substituting $\beta$ by $-\beta$ in (\ref{ident5}) we obtain the following equation:
\begin{equation}\label{ident6}\mu.(\alpha+\beta)\eta(\mu\cdot(\alpha-\beta))-\mu\cdot(\alpha-2\beta)\eta(\mu\cdot\alpha)-\mu\cdot(2\alpha-\beta)\eta(\mu\cdot\beta)=0\end{equation}
by adding (\ref{ident5}) and (\ref{ident6}) we get:
\begin{equation}\label{ident8}(\mu\cdot\alpha)[\eta(\mu\cdot(\alpha+\beta))+\eta(\mu\cdot(\alpha-\beta))-2\eta(\mu\cdot\alpha)]=(\mu\cdot\beta)[\eta(\mu\cdot(\alpha+\beta))+\eta(\mu\cdot(\beta-\alpha))-2\eta(\mu\cdot\beta)]\end{equation}
Let us denoted $x:=\mu\cdot\alpha$ and $y:=\mu\cdot\beta$ and replace them in (\ref{ident8}) we will obtain:
\begin{equation}\label{ident9}x[\eta(x+y)+\eta(x-y)-2\eta(x)]=y[\eta(x+y)-\eta(x-y)-2\eta(y)].\end{equation}
But (\ref{ident9}) is equivalent to the following equation:
\begin{equation}\label{ident10}2x\eta(x)-2y\eta(y)=(x-y)\eta(x+y)+(x+y)\eta(x-y).\end{equation}

Using results on functional equations by PL.Kannappan,T.Riedel and P.K.Sahoo (see \cite{KaRiSa}), the equation (\ref{ident10})
has the following general solution: $$\eta(x)=ax^{3}+A(x)$$ where $A:\mathbb{C}\mapsto\mathbb{C}$ is an additive function. Since we work with continuous function $\eta$, then $A$ will be continuous and additive function, and so it is a linear function $A(x)= bx, b\in \mathbb{C}$.

 Finally, $\eta(x)=ax^{3}+bx$ where $a,b~\in~\mathbb{C}$ and for $x=\mu\cdot\alpha$ we have: $$\eta(\mu\cdot\alpha)= a(\mu\cdot\alpha)^{3}+b(\mu\cdot\alpha) .$$
The central extension of $\mathbf{W}(n)_{\mu}$ is nontrivial if and only if  $a\neq0$ while $b$ can be chosen arbitrary. By the convention taken in Virasoro $2$-cocycle ( $n=1$ ), the choice $a=-b=\frac{1}{12}$ and the generating $2$-cocycle becomes:
\begin{equation}\label{SVirC}C_{\mu}(e_{\mu\cdot\alpha},e_{\mu\cdot\beta})=\delta_{\alpha,-\beta}\eta(\mu\cdot\alpha)c_\mu=\frac{(\mu\cdot\alpha)^{3}-\mu\cdot\alpha}{12}\delta_{\alpha,-\beta}c_\mu\end{equation}
\end{proof}

Take $y=2x$ in equation \ref{ident10} of the proof, we obtain the following equation:
\begin{equation}\label{identi11}5x\eta(x)-4x\eta(2x)+x\eta(3x)=0\end{equation}
\begin{pro}
A polynomial function $\eta$ is solution of equation (\ref{identi11}) if and only if $$\eta(x)= ax^3+bx.$$
\end{pro}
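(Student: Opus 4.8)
The plan is to reduce the functional equation to a coefficient-by-coefficient condition on the polynomial $\eta$. First I would note that since we seek a \emph{polynomial} solution and $\mathbb{C}[x]$ is an integral domain, the common factor $x$ in (\ref{identi11}) may be cancelled as a polynomial identity, so the equation is equivalent to
$$5\eta(x)-4\eta(2x)+\eta(3x)=0$$
holding identically in $x$. Writing $\eta(x)=\sum_{k\geq 0}c_k x^k$ and substituting, the left-hand side becomes $\sum_{k\geq 0}c_k\,(5-4\cdot 2^k+3^k)\,x^k$, so comparing coefficients shows the equation holds if and only if $c_k\,g(k)=0$ for every $k$, where $g(k):=3^k-4\cdot 2^k+5$.

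Thus everything reduces to determining for which nonnegative integers $k$ one has $g(k)=0$. I would compute the small cases directly: $g(0)=2$, $g(1)=0$, $g(2)=-2$, $g(3)=0$, which already isolates $k=1$ and $k=3$ as the only roots in the range $0\le k\le 3$. It then remains to rule out roots $k\geq 4$, and this is the only step requiring a genuine estimate rather than bookkeeping: I would show $g(k)>0$ for all $k\geq 4$ by observing that $g(k)$ is positive as soon as $3^k>4\cdot 2^k$, i.e. as soon as $(3/2)^k>4$; since $(3/2)^4=81/16>4$ and $k\mapsto(3/2)^k$ is strictly increasing, this inequality holds for every $k\geq 4$.

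Having established that $g(k)=0$ precisely when $k\in\{1,3\}$, the condition $c_k\,g(k)=0$ forces $c_k=0$ for all $k\notin\{1,3\}$, whence $\eta(x)=c_3 x^3+c_1 x$, that is, $\eta(x)=ax^3+bx$. The converse is immediate: substituting $\eta(x)=ax^3+bx$ uses only $g(1)=g(3)=0$, so every such $\eta$ solves (\ref{identi11}). The main obstacle—and essentially the only nonroutine point—is the growth comparison ruling out the high-degree coefficients; the rest is a direct matching of coefficients.
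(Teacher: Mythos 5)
Your proof is correct and takes essentially the same approach as the paper's: expand $\eta$ in monomials, compare coefficients, and determine that $g(k)=3^{k}-4\cdot 2^{k}+5$ vanishes exactly for $k\in\{1,3\}$. The only difference is that you explicitly justify the non-vanishing of $g(k)$ for $k\geq 4$ via the estimate $(3/2)^{k}\geq(3/2)^{4}>4$, a point the paper's proof asserts without argument, so your write-up is in fact slightly more complete.
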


\begin{proof}

Let $\eta(x)=\displaystyle\sum_{k=0}^{n}a_kx^{k}$ and
substituting $\eta(x)$ in \ref{identi11} we get
$$\displaystyle\sum_{k=0}^{n}a_k(5-2^{k+2}+3^{k})x^{k+1}=0.$$
This is equivalent to $$a_k(5-2^{k+2}+3^{k})=0 \hbox{ for all }  k\in \mathbb{Z}_+~.$$
But $a_k(5-2^{k+2}+3^{k})=0$ implies that $a_k=0$ or $5-2^{k+2}+3^{k}=0$
Now, if $k=0,~2$ or $k\geq4$ then $5-2^{k+2}+3^{k}\neq0,$ and so $a_k=0$ and if $k=1,~3$ then $5-2^{k+2}+3^{k}=0$ and $a_1$ and $a_3$ are arbitrary.
We obtain finally $\eta(x)=a_1x+a_3x^{3}$.
\end{proof}

The following proposition is a consequence of Theorem \ref{Ext1}.
\begin{pro} The second cohomology group of the solenoidal Witt algebra $\mathbf{W}(n)_{\mu}$ with coefficients in the trivial module $\mathbb{C}$ is one dimensional :
$$H^2(\mathbf{W}(n)_{\mu})=\mathbb{C}.$$
\end{pro}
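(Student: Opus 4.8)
The plan is to extract the statement from the cohomological content already present in Theorem~\ref{Ext1}. By definition $H^2(\mathbf{W}(n)_{\mu})=Z^2(\mathbf{W}(n)_{\mu},\mathbb{C})/B^2(\mathbf{W}(n)_{\mu},\mathbb{C})$ is the quotient of the space of skew-symmetric $2$-cocycles by the space of coboundaries $f([\cdot\,,\cdot])$ with $f\in\mathbf{W}(n)_{\mu}^{*}$; equivalently, it parametrizes the central extensions of $\mathbf{W}(n)_{\mu}$ by $\mathbb{C}$ up to equivalence. Hence it suffices to show that this quotient is one-dimensional, spanned by the class of the cocycle $C_\mu$ constructed in Theorem~\ref{Ext1}.

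First I would observe that the uniqueness part of the proof of Theorem~\ref{Ext1} is precisely a computation of cocycle representatives modulo coboundaries. The change of basis $e_{\mu\cdot\alpha}\mapsto e'_{\mu\cdot\alpha}=e_{\mu\cdot\alpha}+\frac{\theta(0,\mu\cdot\alpha)}{\mu\cdot\alpha}c_\mu$ used there amounts to subtracting the coboundary attached to $f(e_{\mu\cdot\alpha})=\frac{\theta(0,\mu\cdot\alpha)}{\mu\cdot\alpha}$, so it does not alter the cohomology class. After this normalization, the proof shows that every $2$-cocycle has the form $\theta(e_{\mu\cdot\alpha},e_{\mu\cdot\beta})=\delta_{\alpha,-\beta}\,\eta(\mu\cdot\alpha)$ with $\eta(x)=ax^{3}+bx$. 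Thus, modulo coboundaries, $Z^2(\mathbf{W}(n)_{\mu},\mathbb{C})$ is spanned by the cubic cocycle and the linear cocycle, so $H^2(\mathbf{W}(n)_{\mu})$ is at most two-dimensional.

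The decisive step is to show that the linear term is a coboundary while the cubic one is not. Writing the bracket (\ref{soloWitt}) for $\beta=-\alpha$ gives $[e_{\mu\cdot\alpha},e_{-\mu\cdot\alpha}]=-2(\mu\cdot\alpha)\,e_{\mu\cdot0}$. Taking $g$ with $g(e_{\mu\cdot0})=-b/2$ and $g=0$ on all other basis vectors, the coboundary $g([\cdot\,,\cdot])$ takes the value $b\,(\mu\cdot\alpha)$ on $(e_{\mu\cdot\alpha},e_{-\mu\cdot\alpha})$ and vanishes off the diagonal, i.e. it coincides with the linear cocycle; hence the linear part dies in $H^2$. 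Conversely, a coboundary $f([e_{\mu\cdot\alpha},e_{\mu\cdot\beta}])=\mu\cdot(\beta-\alpha)\,f(e_{\mu\cdot(\alpha+\beta)})$ restricted to $\beta=-\alpha$ equals $-2(\mu\cdot\alpha)\,f(e_{\mu\cdot0})$, which is linear in $\mu\cdot\alpha$ and can never reproduce the cubic values $\frac{(\mu\cdot\alpha)^{3}-\mu\cdot\alpha}{12}$ taken by $C_\mu$ on the diagonal; therefore $C_\mu\notin B^2$ and $[C_\mu]\neq 0$. Combining the two facts, every cohomology class is a scalar multiple of $[C_\mu]$, so $H^2(\mathbf{W}(n)_{\mu})=\mathbb{C}[C_\mu]$ is one-dimensional.

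The main obstacle is really the cocycle classification behind $\eta(x)=ax^{3}+bx$, but this has already been carried out inside Theorem~\ref{Ext1} via the functional equation (\ref{ident10}); granting it, the only additional input is the pair of elementary coboundary computations above, both immediate from (\ref{soloWitt}) in the case $\alpha+\beta=\overrightarrow{0}$.
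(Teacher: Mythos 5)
Your proposal is correct and follows essentially the same route as the paper: the paper states this proposition as a direct consequence of Theorem~\ref{Ext1}, with the accompanying remark (the coboundary $\widetilde{f}(e_{\mu\cdot\alpha},e_{\mu\cdot\beta})=(\mu\cdot\beta)\delta_{\mu\cdot(\alpha+\beta),0}$) supplying exactly your observation that the linear term $b(\mu\cdot\alpha)$ dies in cohomology. Your write-up merely makes explicit what the paper leaves implicit, namely that the normalizing change of basis in the uniqueness argument is subtraction of a coboundary, and that the cubic cocycle cannot be a coboundary because any coboundary is linear in $\mu\cdot\alpha$ on the diagonal $\beta=-\alpha$.
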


\begin{rmk}
 Let the map $f:\mathbf{W}(n)_{\mu}\rightarrow \mathbb{C}$ defined by $f(e_{\mu\cdot\alpha})=\frac{1}{2}\delta_{\mu\cdot\alpha,0}$. Then
  $f([e_{\mu\cdot\alpha},e_{\mu\cdot\beta}])=(\mu.\beta)\delta_{\mu\cdot(\alpha+\beta),0}$ and
  $\widetilde{f}:\mathbf{W}(n)_{\mu}\times \mathbf{W}(n)_{\mu}\rightarrow \mathbb{C}$ defined by $$\widetilde{f}(e_{\mu\cdot\alpha},e_{\mu\cdot\beta})=(\mu\cdot\beta)\delta_{\mu\cdot(\alpha+\beta),0}$$
  is a 2-coboundary. So, the multiplication rule (for $\mathbf{Vir}(n)_{\mu}$) can be replaced by any rule:
$$[e_{\mu\cdot\alpha},e_{\mu\cdot\beta}]=\mu\cdot(\beta-\alpha)e_{\mu\cdot(\alpha+\beta)}+\big(a(\mu\cdot\alpha)^3+b\mu\cdot\alpha\big)\delta_{\mu\cdot\alpha,-\mu\cdot\beta}c_\mu$$
with constants $a,b\in\mathbb{C}$ and $a\neq 0$.
It is customary to use the normalization
$$\frac{(\mu\cdot\alpha)^3-\mu\cdot\alpha}{12}.$$
\end{rmk}

\section{Harish-Chandra modules over $\mathbf{Vir}(n)_\mu$}
\subsection{Generalities on Harish-Chandra modules}
\begin{defi}  A $\mathbf{Vir}(n)_\mu$-module $V$ is said to be a Harish-Chandra module or admissible module if :\\
\begin{itemize}
\item[1)] $d_\mu$ acts semisimply on $V$. \\
\item[2)] the eigenspaces of $d_\mu$ are finite-dimensional.\\
\end{itemize}
The eigenspaces of $d_\mu$ will be also called the weight spaces of $V$.
\end{defi}
\begin{defi} An admissible $\mathbf{Vir}(n)_\mu$-module $V$ is said to be cuspidal or bounded if the dimensions
of weight spaces are uniformly bounded by a constant.
\end{defi}
\begin{defi}. A $\mathbf{Vir}(n)_\mu$-module $V$ is indecomposable if a decomposition of $V$ in a
direct sum of non-trivial $\mathbf{Vir}(n)_\mu$-submodules does not exist.
\end{defi}
As a direct consequence of the commutation relations $[d_\mu,e_{\mu\cdot\alpha}]=(\mu\cdot\alpha)e_{\mu\cdot\alpha}$ is the existence of a decomposition of Harish-Chandra indecomposable $\mathbf{Vir}_\mu(n)$-module $V$ to weight subspaces: $$V=\oplus_{\alpha\in \mathbb{Z}^n} V_{a+\mu\cdot\alpha}$$
where $ V_{a+\mu\cdot\alpha}:=\{v\in V|d_\mu.v=(a+\mu\cdot\alpha)v\}$ and $a$ is a complex number. Then the support of $V$ is a subset of the coset $a+\Gamma_{\mu}$.
So any Harish-Chandra $\mathbf{Vir}(n)_\mu$-module can be decomposed into a direct sum of submodules
corresponding to distinct cosets of $\Gamma_\mu$ in $\mathbb{C}$. So we can limited our study to modules with weights in a coset $a+\Gamma_\mu, a\in \mathbb{C}$. In the following, the supports of considered modules will be in a coset $a+\Gamma_\mu$.
The following result gives an important and elementary property of the action of the central element $c_\mu$ on an admissible $\mathbf{Vir}(n)_\mu$-module.
\begin{pro}
In an irreducible  Harish-Chandra  $\mathbf{Vir}(n)_\mu$-module, $c_\mu$ acts by a scalar.\\
\end{pro}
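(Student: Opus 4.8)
The plan is to run a Schur-type argument, using the centrality of $c_\mu$ together with the Harish-Chandra hypothesis that the weight spaces are finite-dimensional. First I would record the two structural facts already at hand: $c_\mu$ is central, that is $[\mathbf{Vir}(n)_\mu,c_\mu]_{SVir}=0$ by \eqref{seloVir}, and by the weight-space decomposition preceding the statement the module splits as $V=\bigoplus_{\alpha\in\mathbb{Z}^n}V_{a+\mu\cdot\alpha}$ with each $V_{a+\mu\cdot\alpha}$ finite-dimensional. Since $c_\mu$ commutes with the action of every element of the algebra, in particular with $d_\mu$, the operator $c_\mu$ preserves each $d_\mu$-eigenspace, i.e. it maps every weight space into itself.

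Next I would extract a single eigenvalue. Choose any $\alpha$ with $V_{a+\mu\cdot\alpha}\neq 0$. As this weight space is a nonzero finite-dimensional complex vector space carried into itself by $c_\mu$, the restriction of $c_\mu$ to it has an eigenvalue $\lambda\in\mathbb{C}$; fix a corresponding eigenvector $0\neq v\in V_{a+\mu\cdot\alpha}$ with $c_\mu\cdot v=\lambda v$.

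Finally I would globalize this scalar using irreducibility. Because $c_\mu$ is central and the identity is always equivariant, the operator $c_\mu-\lambda\,\Id$ is a $\mathbf{Vir}(n)_\mu$-module endomorphism of $V$; consequently its kernel $\Ker(c_\mu-\lambda\,\Id)$ is a submodule of $V$. This submodule is nonzero since it contains $v$, so irreducibility forces $\Ker(c_\mu-\lambda\,\Id)=V$. Hence $c_\mu$ acts on all of $V$ as multiplication by the scalar $\lambda$, which is the assertion.

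The only step genuinely requiring the Harish-Chandra hypothesis is the existence of the eigenvalue $\lambda$: on an infinite-dimensional space a commuting operator need not admit any eigenvector, so the finite-dimensionality of the weight spaces is exactly what makes the argument go through (equivalently, one could invoke Dixmier's countable-dimension version of Schur's lemma, since $V$ is at most countable-dimensional, but the explicit eigenvalue argument is cleaner here). Everything else is the standard Schur mechanism, so I anticipate no real obstacle beyond invoking the finite-dimensionality of weight spaces at precisely the right moment.
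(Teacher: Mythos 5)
Your proof is correct and follows essentially the same route as the paper: centrality of $c_\mu$ makes each finite-dimensional weight space $c_\mu$-invariant, finite-dimensionality over $\mathbb{C}$ yields an eigenvalue, and irreducibility forces the corresponding $\mathbf{Vir}(n)_\mu$-invariant eigenspace to be all of $V$. The paper phrases this via the decomposition into characteristic subspaces of $c_\mu$ (also treating the indecomposable case), whereas you isolate a single eigenvalue $\lambda$ and use $\Ker(c_\mu-\lambda\,\Id)$, but this is the same Schur-type mechanism, and your write-up is if anything the cleaner of the two.
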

\begin{proof}
In both cases, $c_\mu$ and $d_\mu$ commutes so that $V_{a+\mu.\alpha}$ is invariant by $c_\mu$. Thus, the $\mathbf{Vir}(n)_\mu$-module
$V$ can be decomposed into a direct sum, at most countable sum of characteristic
subspaces of $c_\mu$, each of them being invariant by $\mathbf{Vir}(n)_\mu$. In case of an indecomposable $\mathbf{Vir}(n)_\mu$-module $V$,
 $c_\mu$ has at most one characteristic subspace. In case of an irreducible $\mathbf{Vir}(n)_\mu$-module $V$
 the eigenspaces of $c_\mu$ are invariant by $\mathbf{Vir}(n)_\mu$ and the proposition is proved.
\end{proof}

\subsection{Classification of cuspidal modules over $\mathbf{Vir}(n)_\mu$}

 Let $A_n=\mathbb{C}[t_{i}^{\pm1},~1\leq i\leq n]$ be the algebra of Laurent polynomials and let $v_{\mu\cdot\beta}:= t^{\mu\cdot\beta}$. We define a two parameter action on $A_n$ by:
 \begin{equation}\label{actCusp} e_{\mu\cdot\alpha}v_{\mu\cdot\beta} = (\mu\cdot\beta+a+(\mu\cdot\alpha)b)v_{\mu\cdot(\alpha+\beta)}
 \hbox{ where }~\mu\cdot\alpha,\mu\cdot\beta\in\Gamma_\mu  \hbox{ and } a,b\in \mathbb{C}.\end{equation}
 We denote such $\mathbf{W}(n)_{\mu}$-module by $T_{\mu}(a,b)$.
 These modules are no other then the  $\mathbf{W}(n)_{\mu}$-modules of tensor fields studied by Y. Billig and V. Futorny in \cite{BiFu}. They are
cuspidal modules since every weight space in $T_{\mu}(a,b)$ is 1-dimensional.

Two $\mathbf{W}(n)_{\mu}$-modules $T_{\mu}(a,b)$ and $T_{\mu}(a',b')$  are isomorphic if and only if $a-a'\in\Gamma_{\mu}$. The isomorphism is given by
$\varrho:T_{\mu}(a,b)\rightarrow T_{\mu}(a',b');~v_{\mu\cdot\beta}\mapsto v_{\mu\cdot\beta+a-a'}$. So to each coset in $\mathbb{C}/\Gamma_{\mu}$ is associated one and only one  class of isomorphisms of such modules. In particular, to the coset $\Gamma_{\mu}$ is associated the class of the module $T_{\mu}(0,b)$.

The algebraic dual of the $\mathbf{W}(n)_{\mu}$-module $T_{\mu}(a,b)$ is again a tensor field module and is isomorphic to $T_{\mu}(-a,1-b)$.

The irreducibility of the modules  $T_{\mu}(a,b)$ is given by the following proposition:

\begin{pro}

\begin{itemize}
 \item [1)] $T_{\mu}(a,b)$ is irreducible unless $b\in\{0,1\}$ and $a\in\Gamma_{\mu}.$
 \item [2)] If $b\in\{0,1\}$ and $a\in\Gamma_{\mu}.$ It suffices to consider $(a,b)=(0,0)$ or $(a,b)=(0,1)$.\\
 i) If $(a,b)=(0,0),~T_{\mu}(0,0)$ contain the trivial sub-module $\mathbb{C}v_0$ and the quotient $T_{\mu}(0,0)/\mathbb{C}v_0$ is irreducible.\\
 ii) If $(a,b)=(0,1),~ T_{\mu}(0,1)$ contains an irreducible sub-module  $T'_{\mu}(0,1)$ of codimension $1$.
\end{itemize}
\end{pro}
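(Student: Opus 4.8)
The plan is to exploit that each weight space of $T_\mu(a,b)$ is one-dimensional. Since $\mu$ is generic, the map $\sigma_\mu$ is injective, so the weights $a+\mu\cdot\beta$ attached to the basis vectors $v_{\mu\cdot\beta}$ are pairwise distinct and $d_\mu$ acts with simple spectrum. Consequently every submodule $W$ is $d_\mu$-stable and splits as a direct sum of weight lines, i.e. $W=\bigoplus_{\mu\cdot\beta\in S}\mathbb{C}v_{\mu\cdot\beta}$ for a subset $S\subseteq\Gamma_\mu$. The whole problem then reduces to describing which subsets $S$ are stable under the action (\ref{actCusp}).

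First I would record the single coefficient that governs the dynamics. Writing $\gamma=\alpha+\beta$, formula (\ref{actCusp}) reads $e_{\mu\cdot(\gamma-\beta)}v_{\mu\cdot\beta}=c(\beta,\gamma)\,v_{\mu\cdot\gamma}$ with
$$c(\beta,\gamma)=a+(1-b)\,\mu\cdot\beta+b\,\mu\cdot\gamma.$$
Hence $v_{\mu\cdot\gamma}$ is reached from $v_{\mu\cdot\beta}$ exactly when $c(\beta,\gamma)\neq0$, and a subset $S$ yields a submodule if and only if whenever $\mu\cdot\beta\in S$ and $\mu\cdot\gamma\notin S$ one has $c(\beta,\gamma)=0$.

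The core is a case analysis on $b$, using injectivity of $\sigma_\mu$. Suppose $W$ is proper and nonzero, so both $S$ and its complement are nonempty. If $b\notin\{0,1\}$, then fixing $\mu\cdot\beta\in S$ and letting $\mu\cdot\gamma$ range over the complement forces $b\,\mu\cdot\gamma$ to be constant, so by injectivity the complement is a single point $\gamma^\ast$; symmetrically, fixing $\gamma^\ast$ and varying $\mu\cdot\beta\in S$ forces $(1-b)\,\mu\cdot\beta$ constant, so $S$ too is a single point---impossible, since $S$ and its complement cannot both be singletons in the infinite set $\Gamma_\mu$. This gives irreducibility for $b\notin\{0,1\}$, and I expect this simultaneous two-sided rigidity to be the only delicate point; everything else is linear bookkeeping.

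It remains to treat $b\in\{0,1\}$, where the isomorphism $T_\mu(a,b)\cong T_\mu(a',b)$ for $a-a'\in\Gamma_\mu$ lets me normalize $a=0$ once $a\in\Gamma_\mu$, justifying the reduction to $(0,0)$ and $(0,1)$. For $b=0$ one has $c(\beta,\gamma)=a+\mu\cdot\beta$, independent of $\gamma$, so stability forces $\mu\cdot\beta=-a$ for every $\mu\cdot\beta\in S$; thus $S=\emptyset$ unless $-a\in\Gamma_\mu$, yielding irreducibility when $a\notin\Gamma_\mu$ and, after normalizing $a=0$, showing $\mathbb{C}v_0$ (annihilated by all $e_{\mu\cdot\alpha}$) to be the \emph{unique} proper nonzero submodule, whence the submodule lattice is a chain and the quotient is irreducible, proving 2(i). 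For $b=1$ one has $c(\beta,\gamma)=a+\mu\cdot\gamma$, independent of $\beta$, so the complement of $S$ must consist of points with $\mu\cdot\gamma=-a$ and is therefore empty (irreducibility) unless $a\in\Gamma_\mu$; after normalizing $a=0$ the complement is forced to be $\{\overrightarrow{0}\}$, giving the codimension-one submodule $T'_\mu(0,1)=\bigoplus_{\beta\neq\overrightarrow{0}}\mathbb{C}v_{\mu\cdot\beta}$. Its irreducibility follows because inside it $c(\beta,\gamma)=\mu\cdot\gamma\neq0$ for every target $\gamma\neq\overrightarrow{0}$, so any nonzero vector generates all of $T'_\mu(0,1)$, proving 2(ii).
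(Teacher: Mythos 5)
Your proof is correct and complete. Note that the paper itself offers no argument for this proposition: it is stated as known, with the surrounding text deferring to Billig--Futorny \cite{BiFu} (and Su--Zhao \cite{SuZhao}) for the structure of these intermediate-series/cuspidal modules. Your argument is therefore a genuine, self-contained addition rather than a rederivation, and it rests on two correct observations: (i) genericity of $\mu$ makes the spectrum of $d_\mu=e_{\mu\cdot 0}$ on $T_\mu(a,b)$ simple, so every submodule decomposes as $\bigoplus_{\mu\cdot\beta\in S}\mathbb{C}v_{\mu\cdot\beta}$ for some $S\subseteq\Gamma_\mu$; and (ii) the single transition coefficient $c(\beta,\gamma)=a+(1-b)\,\mu\cdot\beta+b\,\mu\cdot\gamma$ governs when $v_{\mu\cdot\gamma}$ is reached from $v_{\mu\cdot\beta}$, so $S$ defines a submodule iff $c(\beta,\gamma)=0$ whenever $\mu\cdot\beta\in S$ and $\mu\cdot\gamma\notin S$. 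The two-sided rigidity for $b\notin\{0,1\}$ (both $S$ and its complement would have to be singletons, impossible in the infinite group $\Gamma_\mu$), the degenerate case $b=0$ (coefficient independent of the target, forcing $\mu\cdot\beta=-a$ on $S$), the case $b=1$ (independent of the source, forcing $\mu\cdot\gamma=-a$ on the complement), and the normalization of $a\in\Gamma_\mu$ to $a=0$ via the shift isomorphism recorded in the paper are all verified correctly. Your approach even buys more than the statement asks: it determines the full submodule lattice, showing $\mathbb{C}v_0$ is the \emph{unique} proper nonzero submodule of $T_\mu(0,0)$ and $T'_\mu(0,1)$ the unique one of $T_\mu(0,1)$, which is exactly what makes the irreducibility of the quotient in 2(i) and of the submodule in 2(ii) immediate. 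The only steps you leave implicit --- the Vandermonde argument behind (i) and the correspondence between submodules of a quotient and submodules containing the kernel --- are standard and harmless to omit.
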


The following theorem is due to Y. Billig and V. Futorny  Theorem 4.8  \cite{BiFu} see also Y. Su, K.Zhao \cite{SuZhao} Theorem 4.6.
\begin{thm}\label{CuspWitt}
Every simple cuspidal $\mathbf{W}(n)_{\mu}$-module is a sub-quotient of a module of intermediate series and it is isomorphic to either:
\begin{itemize}
\item[1)]A module of tensor fields $T_\mu(a, b)$ where $a\not\in\Gamma_\mu$  $\hbox{or}~b\neq0,1$.
\item[2)]The quotient $\overline{T}_\mu(0,0)=T_\mu(0,0)/\mathbb{C}v_0$ of the module $A_n$ of Laurent polynomials for the natural action by the $1$-dimensional sub-module $\mathbb{C}v_0$ of constants.
\item[3)]The trivial $1$-dimensional module.
\end{itemize}
\end{thm}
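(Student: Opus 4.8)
The plan is to reduce the statement to the known classification of uniformly bounded simple weight modules over higher-rank Witt algebras, and then to read off the three cases from the irreducibility Proposition stated above.

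First I would exploit the fact that the bracket (\ref{soloWitt}) depends on $\alpha,\beta$ only through the scalars $\mu\cdot\alpha,\mu\cdot\beta$: writing $x=\mu\cdot\alpha$ and $y=\mu\cdot\beta$, the relation becomes $[e_x,e_y]=(y-x)e_{x+y}$ with $x,y$ ranging over the rank-$n$ lattice $\Gamma_\mu\subset\mathbb{C}$. Thus $\mathbf{W}(n)_\mu$ is exactly the centerless generalized (higher rank) Virasoro algebra $\mathbf{Vir}[\Gamma_\mu]$ graded by $\Gamma_\mu\cong\mathbb{Z}^n$, and its simple cuspidal modules form a special case of the uniformly bounded simple weight modules classified in \cite{SuZhao} and \cite{BiFu}. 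Here no central term intervenes, since $\mathbf{W}(n)_\mu$ is centerless.

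Next I would invoke the representation-theoretic core: a simple cuspidal $\mathbf{W}(n)_\mu$-module $V$ is a subquotient of a module of intermediate series. The mechanism has two layers. The easy layer is the \emph{multiplicity-one} case: if every nonzero weight space of $V$ is one-dimensional, fix weight generators $v_{\mu\cdot\beta}$ and write $e_{\mu\cdot\alpha}v_{\mu\cdot\beta}=f(\alpha,\beta)\,v_{\mu\cdot(\alpha+\beta)}$; imposing (\ref{soloWitt}) yields a functional recursion on $f$ whose only solutions are $f(\alpha,\beta)=\mu\cdot\beta+a+(\mu\cdot\alpha)b$, so that $V\cong T_\mu(a,b)$ as in (\ref{actCusp}), i.e. $V$ is of intermediate series. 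The hard layer, which I would cite from \cite{BiFu} (Theorem 4.8) and \cite{SuZhao}, is the reduction to multiplicity one: one shows that a simple cuspidal module either already has all weight multiplicities equal to $1$ or is a proper subquotient of an intermediate-series module, the Mathieu-type input being that uniformly bounded simple weight modules over these algebras produce no families beyond the tensor-density modules.

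Finally I would sort the intermediate-series modules by their composition factors using the irreducibility Proposition above. If $a\notin\Gamma_\mu$ or $b\neq 0,1$, then $T_\mu(a,b)$ is already simple, giving case (1). Otherwise, using $T_\mu(a,b)\cong T_\mu(a',b')$ whenever $a-a'\in\Gamma_\mu$, I may normalize $(a,b)$ to $(0,0)$ or $(0,1)$. Part (i) of the Proposition shows that $T_\mu(0,0)$ has the trivial submodule $\mathbb{C}v_0$ with simple quotient $\overline{T}_\mu(0,0)$, and part (ii) shows that $T_\mu(0,1)$ has a simple submodule $T'_\mu(0,1)$ of codimension one with trivial one-dimensional quotient. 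Since $T'_\mu(0,1)\cong\overline{T}_\mu(0,0)$ (explicitly via $v_{\mu\cdot\beta}\mapsto(\mu\cdot\beta)v_{\mu\cdot\beta}$, compatibly with the duality $T_\mu(a,b)^{\ast}\cong T_\mu(-a,1-b)$), the only new simple subquotients are $\overline{T}_\mu(0,0)$ and the trivial module, yielding cases (2) and (3). The main obstacle is the hard layer of the second step, namely the multiplicity-one reduction, which is precisely the deep content supplied by the cited theorems; the remaining steps are the direct recursion for multiplicity-one modules and the bookkeeping afforded by the stated Proposition.
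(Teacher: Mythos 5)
Your proposal is correct and follows essentially the same route as the paper: the paper offers no proof of this theorem at all, simply attributing it to Theorem 4.8 of \cite{BiFu} (see also Theorem 4.6 of \cite{SuZhao}), which is exactly the citation you rely on for the hard multiplicity-one reduction. The extra material you supply — the identification of $\mathbf{W}(n)_{\mu}$ with the centerless generalized Virasoro algebra graded by $\Gamma_{\mu}$, the functional-equation derivation of $T_{\mu}(a,b)$ in the multiplicity-one case, and the sorting of composition factors via the irreducibility Proposition together with the isomorphism $T'_{\mu}(0,1)\cong\overline{T}_{\mu}(0,0)$ — is sound bookkeeping that the paper leaves implicit (the last isomorphism appears in its subsequent Remark), so your write-up is a correct, slightly more detailed version of the paper's citation-based argument.
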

\begin{rmk}
By duality, the tensor module $T_{\mu}(0,1)$ will be reducible and $T'_{\mu}(0,1):=\oplus_{s\in \Gamma_\mu\setminus\{0\}}\mathbb{C}v_s$ is a submodule of codimension one isomorphic to $\overline{T}_\mu(0,0)$.
\end{rmk}

 The following theorem reduces the classifications of simple cuspidal $\mathbf{Vir}(n)_\mu$-modules to the classification of simple  $\mathbf{W}(n)_\mu$-cuspidal modules given by Theorem \ref{CuspWitt}.

\begin{thm}
 In an irreducible cuspidal  $\mathbf{Vir}(n)_\mu$-module $V$, $c_\mu$ acts trivially.
\end{thm}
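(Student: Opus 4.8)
The plan is to show that the central charge $c_\mu$ must act as the zero scalar on any irreducible cuspidal $\mathbf{Vir}(n)_\mu$-module $V$. By the earlier proposition, in an irreducible Harish-Chandra module $c_\mu$ already acts by some scalar $c \in \mathbb{C}$; so the task reduces to proving $c = 0$. First I would fix a coset $a + \Gamma_\mu$ containing the support of $V$ and write $V = \bigoplus_{\alpha \in \mathbb{Z}^n} V_{a + \mu\cdot\alpha}$, where by the cuspidality hypothesis $\dim V_{a+\mu\cdot\alpha} \le N$ for some uniform bound $N$ independent of $\alpha$.

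The key step is to exploit the central term in the bracket together with the boundedness of weight multiplicities. Using \eqref{seloVir}, for $\alpha$ with $\mu\cdot\alpha$ in the positive cone I would consider the operators $e_{\mu\cdot\alpha}$ and $e_{-\mu\cdot\alpha}$ acting between the finite-dimensional spaces $V_{a+\mu\cdot\alpha}$, $V_a$, and $V_{a-\mu\cdot\alpha}$. The commutator $[e_{\mu\cdot\alpha}, e_{-\mu\cdot\alpha}]$ acts on the weight space $V_a$ as $-2(\mu\cdot\alpha)\, d_\mu + \tfrac{(\mu\cdot\alpha)^3 - \mu\cdot\alpha}{12} c$, i.e.\ as a scalar operator on $V_a$ equal to $-2(\mu\cdot\alpha)\,a + \tfrac{(\mu\cdot\alpha)^3 - \mu\cdot\alpha}{12}c$. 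The idea is that the left-hand side is a commutator of two operators whose ranks are controlled by $N$, so its trace over any finite-dimensional invariant piece is constrained, while the right-hand side grows like $(\mu\cdot\alpha)^3 c$ as $\mu\cdot\alpha \to \infty$ along the lattice. Taking traces on $V_a$ (dimension $\le N$), the trace of a commutator of operators factoring through finite-dimensional spaces vanishes, forcing $\dim(V_a)\big(-2(\mu\cdot\alpha)a + \tfrac{(\mu\cdot\alpha)^3-\mu\cdot\alpha}{12}c\big) = 0$ for all such $\alpha$; the cubic growth in $\mu\cdot\alpha$ then forces $c = 0$.

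The main obstacle is justifying the vanishing of the trace rigorously: $e_{\mu\cdot\alpha}$ maps $V_a \to V_{a+\mu\cdot\alpha}$ and $e_{-\mu\cdot\alpha}$ maps $V_a \to V_{a-\mu\cdot\alpha}$, so the individual products $e_{\mu\cdot\alpha}e_{-\mu\cdot\alpha}$ and $e_{-\mu\cdot\alpha}e_{\mu\cdot\alpha}$ are genuine endomorphisms of $V_a$, and one must check that $\tr_{V_a}(e_{\mu\cdot\alpha}e_{-\mu\cdot\alpha}) = \tr_{V_{a-\mu\cdot\alpha}}(e_{-\mu\cdot\alpha}e_{\mu\cdot\alpha})$ via the standard identity $\tr(XY)=\tr(YX)$ for maps $X : U \to W$, $Y : W \to U$ between finite-dimensional spaces. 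Since $V_a$ and $V_{a-\mu\cdot\alpha}$ can have different dimensions, I would phrase this carefully through the cyclic property of the trace applied to the rectangular blocks, which is exactly where uniform boundedness of the weight spaces is essential. Once the two traces are equated, the $d_\mu$-scalars $a$ and $a - \mu\cdot\alpha$ enter with the respective dimensions, and after rearranging one isolates a term proportional to $(\mu\cdot\alpha)^3 c$ that must vanish for infinitely many values of $\mu\cdot\alpha$, yielding $c = 0$.

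Once $c = 0$ is established, the central extension acts trivially and $V$ is a cuspidal $\mathbf{W}(n)_\mu$-module in the obvious way; I would close by remarking that this is precisely what reduces the classification of simple cuspidal $\mathbf{Vir}(n)_\mu$-modules to Theorem \ref{CuspWitt}. An alternative to the trace argument, which I would keep in reserve in case the dimension bookkeeping becomes awkward, is to invoke a known structural fact: a nonzero central charge in a Virasoro-type algebra forces, via the highest-weight theory, weight multiplicities to grow without bound (as in the Verma/GHW picture already referenced in the paper), directly contradicting cuspidality. The trace method is cleaner and self-contained, so I expect it to be the route taken, with the finite-dimensional trace identity across weight spaces of differing dimension as the single point needing care.
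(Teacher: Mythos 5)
Your central step fails. You claim that on $V_a$ the trace of $[e_{\mu\cdot\alpha},e_{-\mu\cdot\alpha}]$ vanishes because it is ``a commutator of operators factoring through finite-dimensional spaces,'' and you propose to justify this via $\mathrm{tr}(XY)=\mathrm{tr}(YX)$ for rectangular blocks. But that identity, applied to $X=e_{\mu\cdot\alpha}\colon V_{a-\mu\cdot\alpha}\to V_a$ and $Y=e_{-\mu\cdot\alpha}\colon V_a\to V_{a-\mu\cdot\alpha}$, gives $\mathrm{tr}_{V_a}(e_{\mu\cdot\alpha}e_{-\mu\cdot\alpha})=\mathrm{tr}_{V_{a-\mu\cdot\alpha}}(e_{-\mu\cdot\alpha}e_{\mu\cdot\alpha})$: it equates your first term with a trace over a \emph{different} weight space, not with the second term $\mathrm{tr}_{V_a}(e_{-\mu\cdot\alpha}e_{\mu\cdot\alpha})$. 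The restriction of the commutator to $V_a$ is not a commutator of endomorphisms of $V_a$, and its trace does not vanish. A concrete counterexample sits inside the paper itself: on the tensor module $T_\mu(a,b)$ (where $c_\mu$ acts by $0$), the bracket $[e_{\mu\cdot\alpha},e_{-\mu\cdot\alpha}]=-2(\mu\cdot\alpha)d_\mu$ acts on the one-dimensional weight space $V_{a+\mu\cdot\beta}$ by the scalar $-2(\mu\cdot\alpha)(a+\mu\cdot\beta)$, which is generically nonzero even though both $e_{\mu\cdot\alpha}e_{-\mu\cdot\alpha}$ and $e_{-\mu\cdot\alpha}e_{\mu\cdot\alpha}$ are endomorphisms of that space factoring through one-dimensional spaces. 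Your displayed conclusion $\dim(V_a)\bigl(-2(\mu\cdot\alpha)a+\tfrac{(\mu\cdot\alpha)^3-\mu\cdot\alpha}{12}c\bigr)=0$ is therefore already false for $c=0$, $a\neq 0$. What the cyclic identity actually yields is a telescoping relation: writing $T_k=\mathrm{tr}\bigl(e_{-\mu\cdot\alpha}e_{\mu\cdot\alpha}|_{V_{a+k\mu\cdot\alpha}}\bigr)$, one gets $T_{k-1}-T_k=\dim(V_{a+k\mu\cdot\alpha})\bigl(-2(\mu\cdot\alpha)(a+k\mu\cdot\alpha)+\tfrac{(\mu\cdot\alpha)^3-\mu\cdot\alpha}{12}c\bigr)$, and since the $T_k$ are not bounded (they grow quadratically already on $T_\mu(a,b)$), no contradiction with $c\neq 0$ can be extracted from this identity alone. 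Uniform boundedness of the weight spaces bounds dimensions, not the size of matrix entries, which is exactly what would be needed.

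The paper takes a different and essentially citation-based route: it restricts $V$ to the rank-one subalgebra $\mathbf{Vir}_i=\bigoplus_{m\in\mathbb{Z}}\mathbb{C}t_i^m d_\mu\oplus\mathbb{C}c_\mu$, observes via $e^i_m=\mu_i^{-1}t_i^m d_\mu$ that this is a copy of the classical Virasoro algebra and that $V$ restricts to a cuspidal $\mathbf{Vir}$-module, and then invokes Theorem II.7 of Martin--Piard (or Proposition 5.9 of Chari--Pressley), which says precisely that the central charge acts by zero on such modules. Your ``fallback'' argument gestures toward this kind of statement, but as written it is circular: a cuspidal module is not a highest weight module, so Verma/GHW theory does not apply to it directly, and ``nonzero central charge forces unbounded weight multiplicities'' is not a routine consequence of highest-weight theory --- it \emph{is} the nontrivial theorem being cited. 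To repair your proof you should either perform the reduction to the rank-one Virasoro subalgebra and quote Martin--Piard or Chari--Pressley, or genuinely reprove their result; the single-weight-space trace computation cannot substitute for it.
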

\begin{proof}
To prove that $c_{\mu}$ acts by zero, consider the subalgebra $\mathbf{Vir}_i$ of $\mathbf{Vir}(n)_\mu$  given by:  $$\mathbf{Vir}_i:=\oplus_{n\in \mathbb{Z}}\mathbb{C}t_i^nd_\mu\oplus \mathbb{C}c_\mu.$$
This algebra is  isomorphic to $\mathbf{Vir}$ where the isomorphism is given by:
$$\varphi_i:\mathbf{Vir}\rightarrow \mathbf{Vir}_i,~e_m\mapsto e_m^i=\mu_i^{-1}t_i^md_\mu,~C\mapsto c_\mu.$$
By the restriction of the action of $\mathbf{Vir}(n)_\mu$ on $V$ to $\mathbf{Vir}_i$ , the module $V$ becomes a cuspidal module of $\mathbf{Vir}_i$. Then by the isomorphism $\varphi_i$ it becomes a cuspidal $\mathbf{Vir}$-module. Now  we apply Theorem II.7 of Martin and  Piard \cite{MP} or Proposition 5.9 of Chary and Pressley \cite{ChPr}, the central charge acts by zero on a cuspidal $\mathbf{Vir}$-modules. Then $c_\mu$ acts trivially on $V$.
\end{proof}

\subsection{Generalized highest weight modules over $\mathbf{Vir}(n)_{\mu}$}
For $\mu=(\mu_1,\mu_2,\ldots,\mu_n)\in \mathbb{C}^n$, let $\mu'=(\mu_2,\ldots,\mu_n)\in \mathbb{C}^{n-1}$. For any $\alpha=(\alpha_1,\ldots,\alpha_n)\in \mathbb{Z}^{n}$ we have $\mu\cdot\alpha=\mu_1\alpha_1+\mu'\c\alpha'$  where $\alpha'=(\alpha_2,\ldots,\alpha_n)$. This induces a natural embedding of $\Gamma_{\mu'}$ in $\Gamma_{\mu}$
given by $\mu'\c\alpha'\mapsto \mu\cdot(0,\alpha')$. The embedding $\Gamma_{\mu'}\hookrightarrow \Gamma_{\mu}$ as defined below, induces an embedding of the Lie algebra $\mathbf{HVir}(n-1)_{\mu'}$ into the Lie algebra $\mathbf{HVir}(n)_{\mu}$ given by:
$$e_{\mu'\c\alpha'}\mapsto e_{\mu\cdot(0,\alpha')}$$
Now we assume that $\Gamma_{\mu} =\mathbb{ Z}\mu_{1} \oplus \Gamma_{\mu'} \subset\mathbb{ C} \hbox{ where } 0
\neq \mu_1 \in \mathbb{ C }$ and $\Gamma_{\mu'}$ is a nonzero subgroup
of $\mathbb{C}$.
Let $A_{n-1}=\mathbb{C}[t_2^{\pm1},\ldots,t_{n}^{\pm1}]$, then we have the following $\mathbb{Z}$-grading of $\mathbf{Vir}(n)_{\mu}$:
$$\mathbf{Vir}(n)_{\mu}=\oplus_{i\in\mathbb{Z}}\mathbf{Vir}(n)_{\mu}^{i}$$ where $\mathbf{Vir}(n)_{\mu}^{i}=t_1^{i}A_{n-1}d_{\mu}\oplus\mathbb{C}c_\mu$ if $i\neq 0$ and $\mathbf{Vir}(n)_{\mu}^{0}=A_{n-1}d_{\mu}\oplus\mathbb{C}c_{\mu}.$
Then, $\mathbf{Vir}(n)_{\mu}^{0}$ is isomorphic to $\mathbf{Vir}(n-1)_{\mu'}$.

For $a,b\in\mathbb{C}$, we denote $T_{\mu'}(a,b)$ the $\mathbf{Vir}(n)_{\mu}^{0}$ module of tensor fields $$T_{\mu'}(a,b)=\oplus_{\mu'\c\kappa'\in\Gamma_{\mu'}}\mathbb{C}v_{\mu'\c\kappa'}$$
subject to the action :
\begin{equation}\label{act1}e_{\mu'\c\alpha'}.v_{\mu'\c\kappa'}=(a+\mu'\c\kappa'+b(\mu'\c\alpha'))v_{\mu'\c(\alpha'+\kappa')},c_{\mu}.v_{\mu'\c\kappa'}=0 \hbox{ for }\mu'\c\alpha',\mu'\c\kappa'\in\Gamma_{\mu'}.\end{equation}
Let $\mathbf{Vir}(n)_{\mu}^{\pm}:=\oplus_{i\in\pm\mathbb{N}}\mathbf{Vir}(n)_{\mu}^{i}$.
We extend the $\mathbf{Vir}(n)_{\mu}^{0}$ module structure on $T_{\mu'}(a,b)$ given by (\ref{act1}) to $\mathbf{Vir}(n)_{\mu}^{+}\oplus\mathbf{Vir}(n)_{\mu}^{0}$
where the elements of $\mathbf{Vir}(n)_{\mu}^{+}$ act by zero on $T_{\mu'}(a,b)$.
Then by induction, we obtain the generalized Verma module
$$\widetilde{V}(a,b,\Gamma_{\mu'})=Ind^{\mathbf{Vir}(n)_{\mu}}_{\mathbf{Vir}(n)_{\mu}^{+}\oplus\mathbf{Vir}(n)_{\mu}^{0}}T_{\mu'}(a,b).$$
As vector spaces we have $\widetilde{V}(a,b,\Gamma_{\mu'})\cong U(\mathbf{Vir}(n)_{\mu}^{-})\otimes_{\mathbb{c}}T_{\mu'}(a,b).$
 The module $\widetilde{V}(a,b,\Gamma_{\mu'})$ has a unique maximal proper
submodule $J(a,b,\Gamma_{\mu'})$ trivially intersecting  $T_{\mu'}(a,b)$. The quotient module
$$\overline{V}=\overline{V}(a,b,\Gamma_{\mu'}):=\widetilde{V}(a,b,\Gamma_{\mu'})/J(a,b,\Gamma_{\mu'})$$
 is uniquely determined by the constants $a,b$ and $$\overline{V}(a,b,\Gamma_{\mu'})=\oplus_{i>0} \overline{V}_{a-i\mu_1+\Gamma_{\mu'}}$$
where
$\overline{V}_{a-i\mu_1+\Gamma_{\mu'}} = \oplus_{\mu'\c\kappa\in\Gamma_{\mu'}}\overline{V}_{a-i\mu_1+\mu'\c\kappa}$
and $$\overline{V}_{a-i\mu_1+\mu'\c\kappa}=\{v\in\overline{V}/d_{\mu}v=(a-i\mu_1+\mu'\c\kappa)v\}$$
We can similarly define $\widetilde{V}_{a+i\mu_1+\Gamma_{\mu'}}$ and $\widetilde{V}_{a-i\mu_1+\Gamma_{\mu'}}$.

The following theorem is due to S. Berman and Y. Billig \cite{BeBill}, Theorem 1.12. See also Y. Billig and K. Zhao \cite{BiZa}, Theorem 1.5 and generally Theorem 3.1, also Theorem 2.4 in R. Lu and K. Zhao \cite{LuZhao}.

\begin{thm}
The weight spaces of the $\mathbf{Vir}(n)_{\mu}$-module $\overline{V}(a,b,\Gamma_{\mu'})$ are finite dimensional.
More precisely, $dim\overline{V}_{a-i\mu_1+\mu'\c\kappa}< 1.3 .\ldots. (2i + 1) \hbox{ for all } i \in \mathbb{N},\mu'\c\kappa\in\Gamma_{\mu'}.$
\end{thm}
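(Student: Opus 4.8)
The plan is to exploit the $\mathbb{Z}$-grading $\mathbf{Vir}(n)_{\mu}=\oplus_{i\in\mathbb{Z}}\mathbf{Vir}(n)_{\mu}^{i}$ by $t_1$-degree. Under it $\overline{V}(a,b,\Gamma_{\mu'})=\oplus_{i\geq 0}\overline{V}[-i]$, where the top $\overline{V}[0]\cong T_{\mu'}(a,b)$ is killed by $\mathbf{Vir}(n)_{\mu}^{+}$ and $\overline{V}[-i]$ is the image of the degree $-i$ part of $U(\mathbf{Vir}(n)_{\mu}^{-})$ applied to $T_{\mu'}(a,b)$. Since $\overline{V}_{a-i\mu_{1}+\mu'\cdot\kappa}$ is exactly the $\mu'\cdot\kappa$-component of $\overline{V}[-i]$ and each weight space of $T_{\mu'}(a,b)$ is one dimensional, it is enough to bound $\dim\overline{V}[-i]_{\mu'\cdot\kappa}$ uniformly in $\kappa$ (I take the standing assumption $\Gamma_{\mu}=\mathbb{Z}\mu_1\oplus\Gamma_{\mu'}$, hence $n\geq 2$). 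First I would record two structural facts: $c_{\mu}$ acts by $0$ on $\overline{V}$ (it acts by $0$ on the generating top by (\ref{act1}) and $\overline{V}$ is irreducible), and $\omega(e_{\mu\cdot\alpha})=e_{-\mu\cdot\alpha}$, $\omega(c_{\mu})=c_{\mu}$ defines an anti-involution exchanging $\mathbf{Vir}(n)_{\mu}^{+i}$ and $\mathbf{Vir}(n)_{\mu}^{-i}$. The latter yields a contravariant (Shapovalov) form on $\overline{V}$ whose radical is the maximal submodule $J(a,b,\Gamma_{\mu'})$, so that $\dim\overline{V}[-i]_{\mu'\cdot\kappa}$ is the rank of this form, computed by pairing lowering monomials against raising monomials and projecting onto the one dimensional top.

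Second, I would cut the spanning set down to the degree $-1$ generators. Because $\mu$ is generic and $n\geq 2$, for a prescribed $\sigma'$ one can choose $\gamma',\delta'$ with $\gamma'+\delta'=\sigma'$ and $\mu'\cdot(\gamma'-\delta')\neq 0$, so that $[e_{\mu\cdot(-1,\delta')},e_{\mu\cdot(-1,\gamma')}]=\mu'\cdot(\gamma'-\delta')\,e_{\mu\cdot(-2,\sigma')}$ recovers every degree $-2$ generator; iterating, $\mathbf{Vir}(n)_{\mu}^{-}$ is generated as a Lie algebra by $\mathbf{Vir}(n)_{\mu}^{-1}$. Consequently $\overline{V}[-i]_{\mu'\cdot\kappa}$ is spanned by the length $i$ monomials $e_{\mu\cdot(-1,\gamma'_{1})}\cdots e_{\mu\cdot(-1,\gamma'_{i})}v_{\mu'\cdot\kappa'}$ with $\kappa'+\gamma'_{1}+\cdots+\gamma'_{i}=\kappa$, and the whole estimate reduces to the rank of the pairing of these monomials with products of $i$ raising generators $e_{\mu\cdot(1,\delta')}$.

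Third comes the rank estimate, which carries the quantitative bound and which I would prove by induction on $i$. The base case $i=1$ already exhibits the mechanism. Applying a raising generator to a depth $1$ weight vector lands in the one dimensional top space $\overline{V}[0]_{\mu'\cdot(\kappa+\delta')}$, and using $c_{\mu}=0$, the bracket (\ref{seloVir}) and the action (\ref{act1}) one finds $e_{\mu\cdot(1,\delta')}e_{\mu\cdot(-1,\gamma')}v_{\mu'\cdot(\kappa-\gamma')}=F(\mu'\cdot\gamma',\mu'\cdot\delta')\,v_{\mu'\cdot(\kappa+\delta')}$ with $F(x,y)=(x-y-2\mu_{1})\big((a+\mu'\cdot\kappa)+(b-1)x+by\big)$. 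As a function of the dual variable $y$ this is a polynomial of degree $2$, so the matrix $[F(\mu'\cdot\gamma',\mu'\cdot\delta')]$ factors through the three functions $1,y,y^{2}$ and has rank at most $3=1\cdot 3$; since a vector lies in the radical iff all these images vanish, $\dim\overline{V}[-1]_{\mu'\cdot\kappa}\leq 3$. For the inductive step I would pair $\overline{V}[-i]$ with a single raising generator $e_{\mu\cdot(1,\delta')}$, landing in $\overline{V}[-(i-1)]$; moving this generator to the right through the $i$ lowering factors, each commutator (\ref{seloVir}) turns it into a degree $0$ operator acting by (\ref{act1}), and tracking the polynomial dependence on $\mu'\cdot\delta'$ shows that only boundedly many directions in $\delta'$ survive, giving a recursion $\dim\overline{V}[-i]_{\mu'\cdot\kappa}\leq(2i+1)\,\dim\overline{V}[-(i-1)]_{\mu'\cdot\kappa''}$ and hence $\dim\overline{V}[-i]_{\mu'\cdot\kappa}\leq 1\cdot 3\cdots(2i+1)$.

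The hard part is precisely this last bookkeeping: extracting the exact per level factor $2i+1$ (equivalently, bounding the degree of the polynomial produced after pushing one raising generator through $i$ lowering generators, or the length of the relevant $\mathfrak{sl}_{2}$-string for the triple $\{e_{\mu\cdot(-1,0)},d_{\mu},e_{\mu\cdot(1,0)}\}$), and doing so uniformly in $\kappa$. Since this quantitative core is already established for generalized Virasoro algebras, I would invoke it from Berman--Billig \cite{BeBill} (Theorem 1.12) and Lu--Zhao \cite{LuZhao} (Theorem 2.4) rather than redo the combinatorics; the grading reduction and the passage to degree $-1$ generators of the first two paragraphs are the routine parts specific to $\mathbf{Vir}(n)_{\mu}$.
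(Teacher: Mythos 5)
Your proposal ends exactly where the paper does: the paper offers no proof of this theorem at all, it simply attributes it to Theorem 1.12 of Berman--Billig \cite{BeBill} (see also \cite{BiZa} and Theorem 2.4 of \cite{LuZhao}), and your final paragraph defers the quantitative core --- the factor $2i+1$ per level --- to precisely those references. So in substance the two arguments coincide, and what you add is scaffolding the paper omits. Most of that scaffolding is sound: the reduction via the $\mathbb{Z}$-grading, the generation of $\mathbf{Vir}(n)_{\mu}^{-}$ by its degree $-1$ component (valid because $\mu$ is generic and $n\geq2$), and the depth-one computation, where the kernel $F(x,y)=(x-y-2\mu_{1})\bigl((a+\mu'\cdot\kappa)+(b-1)x+by\bigr)$ does have rank at most $3$ as a matrix indexed by $\Gamma_{\mu'}\times\Gamma_{\mu'}$. (In fact, writing $F=g_{0}(x)+g_{1}(x)y-by^{2}$ with $g_{0}$ quadratic and $g_{1}$ linear, the rank is exactly $3$ whenever $b\neq0,1$; so the bound $3$ is attained at $i=1$, and the strict inequality in the statement should presumably read $\leq$.)

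Two caveats on the scaffolding. First, the Shapovalov-form claim is false in general: a contravariant form with respect to $\omega(e_{\mu\cdot\alpha})=e_{-\mu\cdot\alpha}$ exists on $\widetilde{V}(a,b,\Gamma_{\mu'})$ only if the inducing module is isomorphic to its $\omega$-contravariant dual, and that dual is $T_{\mu'}(a,1-b)$, so the form exists essentially only for $b=\tfrac{1}{2}$; in general one must pair $\widetilde{V}(a,b,\Gamma_{\mu'})$ against the opposite generalized Verma module built on the dual tensor module. Fortunately this is inessential packaging: what you actually use is that a positive-depth vector killed by all raising operators generates a submodule meeting $T_{\mu'}(a,b)$ trivially, hence lies in $J(a,b,\Gamma_{\mu'})$, so that $\dim\overline{V}_{a-i\mu_{1}+\mu'\cdot\kappa}$ equals the rank of the pairing of lowering monomials against raising monomials projected to the top --- no bilinear form needed. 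Second, the inductive step does not typecheck as written: for varying $\delta'$ the vectors $e_{\mu\cdot(1,\delta')}w$ land in \emph{different} weight spaces $\overline{V}_{a-(i-1)\mu_{1}+\mu'\cdot(\kappa+\delta')}$, so injectivity of $w\mapsto(e_{\mu\cdot(1,\delta')}w)_{\delta'}$ together with finite-dimensionality of each target does not by itself bound $\dim\overline{V}_{a-i\mu_{1}+\mu'\cdot\kappa}$, and controlling this uniformly in $\kappa$ is exactly the combinatorial content of \cite{BeBill}. Since you explicitly invoke that reference for this step, your argument as a whole stands; one should just be clear that, as in the paper, everything quantitative is imported rather than proved.
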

Let $V=\oplus_{u\in \Gamma_{\mu}} V_{a+u},~~a\in \mathbb{C}$ be an irreducible weight module with finite dimensional weight spaces ($dim V_{a+u}<\infty$).
The following definition of generalized highest weight modules (\textbf{GHW} module for shirt) is due to Y. Su \cite{Su} see also \cite{SuZhao}.
\begin{defi} Let $(u_1,\ldots,u_n)$ be a $\mathbb{Z}$-basis of $\Gamma_\mu$ and let $\Gamma_\mu^{>0}:= \mathbb{Z}^+u_1\oplus\ldots\oplus\mathbb{Z}^+u_n$ and $\mathbf{Vir}(n)_{\mu}^{>0}:= \oplus_{u\in \Gamma_\mu^{>0}} (\mathbf{Vir}(n)_{\mu})_u.$ Assume that there exists $\lambda_0\in Supp(V)$ and nonzero $v_{\lambda_0}\in V_{\lambda_0}$ such that: $\mathbf{Vir}(n)_{\mu}^{>0}v_{\lambda_0}=0$. Then $V$ is said to be a generalized highest weight module
with generalized highest weight $\lambda_0$ and generalized highest weight vector $v_{\lambda_0}$. Such module $V$ is denoted by $V(\lambda_0)$.
\end{defi}

\begin{rmk}
Since for the choice of a $\mathbb{Z}$-basis $(u_1,\ldots,u_n)$ of $\Gamma_\mu$, the $n$-tuple $(-u_1,\ldots,-u_n)$ is also a $\mathbb{Z}$-basis. The notion of generalized highest weight modules and the notion of generalized lowest modules coincide. We will consider only generalized highest weight modules.
\end{rmk}

Applying
 Theorem 3.9. of R. Lu and K. Zhao, \cite{LuZhao} and Theorem 1.2. of Y. Su, \cite{Su}, we have the following theorem in the case of the Lie algebra  $\mathbf{Vir}(n)_{\mu}$.

\begin{thm} \label{thmLZS}
Any irreducible $\mathbf{Vir}(n)_{\mu}$-module $\overline{V}(a,b,\Gamma_{\mu'})$ is a generalized highest weight module $V(\lambda_0)$ for some $\lambda_0$ in $Supp(\overline{V}(a,b,\Gamma_{\mu'}))$.
\end{thm}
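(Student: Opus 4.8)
The plan is to verify the defining condition of a generalized highest weight module directly: I will produce an adapted $\mathbb{Z}$-basis of $\Gamma_\mu$ together with a single weight vector annihilated by the associated positive cone. The classification results of \cite{Su,LuZhao} yield the statement abstractly, but for these particular modules the GHW structure can be read off by hand from the shape of the module. The structural input I would use is already in place: $\overline{V}(a,b,\Gamma_{\mu'})$ is irreducible by construction and is a weight module with finite dimensional weight spaces by the preceding theorem. Crucially, it is \emph{one-sided} with respect to the $\mathbb{Z}$-grading $\mathbf{Vir}(n)_\mu=\oplus_{i\in\mathbb{Z}}\mathbf{Vir}(n)_\mu^{i}$ by the $t_1$-degree: since $\mathbf{Vir}(n)_\mu^{+}$ acts by zero on the top component $T_{\mu'}(a,b)$ and $\widetilde V\cong U(\mathbf{Vir}(n)_\mu^{-})\otimes T_{\mu'}(a,b)$ is induced from below, $\mathrm{Supp}(\overline V)$ lies in a half-space in the $\mu_1$-direction, and its extreme (top) level is exactly the image of $T_{\mu'}(a,b)$, which is nonzero because the maximal proper submodule meets $T_{\mu'}(a,b)$ trivially.

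First I would tilt the basis so that every generator of the positive cone also strictly increases the $t_1$-level. Choosing a $\mathbb{Z}$-basis $w_2,\ldots,w_n$ of $\Gamma_{\mu'}$ I set
\[
u_1=\mu_1,\qquad u_i=w_i+m_i\mu_1\quad(2\le i\le n),
\]
with integers $m_i\ge 1$. Because $\Gamma_\mu=\mathbb{Z}\mu_1\oplus\Gamma_{\mu'}$, the transition matrix from $(\mu_1,w_2,\ldots,w_n)$ to $(u_1,\ldots,u_n)$ is unitriangular, hence unimodular, so $(u_1,\ldots,u_n)$ is again a $\mathbb{Z}$-basis of $\Gamma_\mu$. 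By construction each $u_j$ has strictly positive $t_1$-level (level $1$ for $u_1$, level $m_i\ge 1$ for $u_i$), so every nonzero element of the cone $\Gamma_\mu^{>0}=\mathbb{Z}^+u_1\oplus\cdots\oplus\mathbb{Z}^+u_n$ has strictly positive level.

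Then I would take $v_{\lambda_0}$ to be a nonzero vector at the top level of $\mathrm{Supp}(\overline V)$, for instance the image of the generating vector of $T_{\mu'}(a,b)$, and let $\lambda_0$ be its weight. For any $\gamma\in\Gamma_\mu^{>0}\setminus\{0\}$ the weight $\lambda_0+\gamma$ has $t_1$-level strictly larger than that of $\lambda_0$, hence lies above the top level and outside $\mathrm{Supp}(\overline V)$. Since the weight-$\gamma$ generator $e_{\mu\cdot\alpha}$ satisfies $e_{\mu\cdot\alpha}v_{\lambda_0}\in\overline{V}_{\lambda_0+\gamma}=0$, this forces $\mathbf{Vir}(n)_\mu^{>0}v_{\lambda_0}=0$. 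Thus $v_{\lambda_0}$ is a generalized highest weight vector for the basis $(u_1,\ldots,u_n)$, and by irreducibility $\overline{V}(a,b,\Gamma_{\mu'})=V(\lambda_0)$.

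I expect the main obstacle to lie not in the construction of the vector but in rigorously securing the two structural facts it rests on, namely that $\mathrm{Supp}(\overline V)$ is genuinely one-sided with a nonzero extremal level and that no weights occur above it; these are precisely the content of the induced construction of $\widetilde V(a,b,\Gamma_{\mu'})$ together with the finiteness theorem quoted above. Alternatively, one may bypass the explicit cone argument by observing that $\mathbf{Vir}(n)_\mu$ is isomorphic to the rank $n$ generalized Virasoro algebra $\mathbf{Vir}[\Gamma_\mu]$ with $\Gamma_\mu\cong\mathbb{Z}^n$, and that $\overline{V}(a,b,\Gamma_{\mu'})$ is exactly of the type treated in Theorem~3.9 of \cite{LuZhao} and Theorem~1.2 of \cite{Su}; along that route the work reduces to checking the hypotheses of those theorems (irreducible, weight, and not uniformly bounded), the last of which again follows from the growth of the weight-space dimensions recorded in the preceding theorem.
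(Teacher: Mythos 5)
Your primary argument is correct, and it takes a genuinely different route from the paper: the paper gives no argument for this theorem at all, deducing it by citing Theorem 3.9 of R. Lu and K. Zhao \cite{LuZhao} and Theorem 1.2 of Y. Su \cite{Su}, whereas you verify the definition of a \textbf{GHW} module directly. Your key step is the tilted basis: the obvious basis $(\mu_1,w_2,\dots,w_n)$ of $\Gamma_\mu=\mathbb{Z}\mu_1\oplus\Gamma_{\mu'}$ fails (the generator $w_2$ has $t_1$-level $0$, and $e_{\mu\cdot(0,\alpha')}$ does not annihilate the top level), so you pass to $u_1=\mu_1$, $u_i=w_i+m_i\mu_1$ with $m_i\ge 1$, a unimodular change of basis after which every nonzero element of the cone $\Gamma_\mu^{>0}=\mathbb{Z}^+u_1\oplus\cdots\oplus\mathbb{Z}^+u_n$ has strictly positive $t_1$-level. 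Combined with the structural facts the paper establishes before the theorem --- $\mathrm{Supp}(\overline V)$ lies in $t_1$-levels $\le 0$ because $\widetilde V(a,b,\Gamma_{\mu'})\cong U(\mathbf{Vir}(n)_{\mu}^{-})\otimes T_{\mu'}(a,b)$; the level-$0$ piece is a nonzero copy of $T_{\mu'}(a,b)$ because $J(a,b,\Gamma_{\mu'})\cap T_{\mu'}(a,b)=0$; irreducibility of $\overline V$; and finite-dimensionality of weight spaces from the Berman--Billig bound --- this makes any nonzero level-$0$ vector a generalized highest weight vector, which is exactly the paper's definition (irreducibility then gives that it generates the module). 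Your route buys an explicit, self-contained proof exhibiting the GHW basis and vector; the paper's route buys brevity and places the statement inside the general classification theory of generalized Virasoro algebras, at the cost of being a pure citation.

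One caveat, affecting only your fallback route: the hypothesis ``not uniformly bounded'' needed to invoke Su's and Lu--Zhao's theorems does \emph{not} follow from ``the growth of the weight-space dimensions recorded in the preceding theorem,'' because that theorem records only an upper bound, $\dim\overline V_{a-i\mu_1+\mu'\cdot\kappa}<1\cdot 3\cdots(2i+1)$, which is perfectly compatible with uniform boundedness. To repair that route, argue instead that a uniformly bounded nontrivial irreducible $\mathbf{Vir}(n)_\mu$-module is cuspidal, hence by Section 3.2 (central charge acting by zero together with Theorem \ref{CuspWitt}) is a tensor module or a subquotient of one; all of these have support equal to a full coset of $\Gamma_\mu$ with at most one point removed, contradicting the one-sided support of $\overline V$. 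Your main argument never uses this hypothesis, so the flaw is inessential to your proof as a whole.
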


In Y. Su, \cite{Su1,Su} , it is proved that for a generalized Virasoro algebra an irreducible weight module with finite dimensional weight spaces is either a cuspidal or a generalized highest weight module. In our particular case we have the following theorem:

\begin{thm}\label{thmS}
  Let $V$ is a nontrivial irreducible weight $\mathbf{Vir}(n)_{\mu}$-module with finite dimensional weight spaces. Then $V$ is one of the following classes:
\begin{itemize}
\item[a)]If $V$ is uniformly bounded, then $V \cong T_{\mu}(a,b)$ for $(a,b)\in \mathbb{C}^{2} \backslash \{(0,0)\}$ or $V\cong \overline{T}_{\mu}(0,0).$
\item[b)]If $V$ is not uniformly bounded, then $V$ is a \textbf{GHW} module.

\end{itemize}
\end{thm}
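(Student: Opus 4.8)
The plan is to split the argument along the dichotomy already built into the hypothesis: for a nontrivial irreducible weight module $V$ with finite-dimensional weight spaces, either the weight multiplicities are uniformly bounded or they are not, and these two alternatives yield exactly the two classes (a) and (b). The first thing I would record is that $\mathbf{Vir}(n)_{\mu}$ is, through the bracket (\ref{seloVir}) and the identification $\Gamma_{\mu}\cong\mathbb{Z}^{n}$ furnished by $\sigma_{\mu}$, precisely a rank $n$ generalized Virasoro algebra with index group $\Gamma_{\mu}$: writing $x=\mu\cdot\alpha$, $y=\mu\cdot\beta$, $e_{\mu\cdot\alpha}=L_{x}$ and $c_{\mu}=C$, the relations become $[L_{x},L_{y}]=(y-x)L_{x+y}+\frac{x^{3}-x}{12}\delta_{x+y,0}C$, and $\delta_{\alpha,-\beta}=\delta_{x+y,0}$ because $\sigma_{\mu}$ is injective by genericity. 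This lets me transport the classification theory of Y. Su \cite{Su,Su1} and its refinement by R. Lu and K. Zhao \cite{LuZhao} verbatim to our setting, so that the theorem is simply the specialization to $\Gamma=\Gamma_{\mu}$ of their general results.

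For the uniformly bounded case (a), a uniformly bounded module is cuspidal in the sense of our Definition, so the theorem of Section 3.2 applies and $c_{\mu}$ acts by zero on $V$. Consequently $V$, restricted along the embedding $\mathbf{W}(n)_{\mu}\hookrightarrow\mathbf{Vir}(n)_{\mu}$, is an irreducible cuspidal $\mathbf{W}(n)_{\mu}$-module, irreducibility being preserved because $c_{\mu}$ contributes nothing. Now Theorem \ref{CuspWitt} classifies such modules: up to isomorphism they are a tensor field module $T_{\mu}(a,b)$, the quotient $\overline{T}_{\mu}(0,0)$, or the trivial module. Discarding the trivial module by hypothesis, and normalizing $a$ modulo $\Gamma_{\mu}$ via the isomorphism $T_{\mu}(a,b)\cong T_{\mu}(a',b')$ whenever $a-a'\in\Gamma_{\mu}$, I obtain precisely the modules listed in (a). This case is essentially bookkeeping once the triviality of $c_{\mu}$ is in hand.

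The non-uniformly bounded case (b) is where the genuine difficulty lies, and I would lean on the analysis of \cite{Su,Su1,LuZhao}. The goal is to produce a generalized highest weight vector: a $\mathbb{Z}$-basis $(u_{1},\ldots,u_{n})$ of $\Gamma_{\mu}$, a weight $\lambda_{0}\in Supp(V)$, and a nonzero $v_{\lambda_{0}}\in V_{\lambda_{0}}$ annihilated by $\mathbf{Vir}(n)_{\mu}^{>0}$. The heart of Su's argument is that when the weight multiplicities are unbounded the support of $V$ cannot occupy a full translate of $\Gamma_{\mu}$ symmetrically; one can then choose a direction in $\Gamma_{\mu}\otimes\mathbb{R}$ along which the support is bounded above, and a weight vector sitting at an extreme weight in that direction is killed by the corresponding positive cone $\mathbf{Vir}(n)_{\mu}^{>0}$. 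Establishing this one-sided boundedness rigorously, rather than positing it, is the main obstacle; it is exactly the content carried out in \cite{Su1} and sharpened in \cite{LuZhao}, so I would invoke it directly in our situation instead of reproving it.

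Finally I would close the loop using Section 3.3: a generalized highest weight module is, by Theorem \ref{thmLZS} together with the construction of the generalized Verma modules $\widetilde{V}(a,b,\Gamma_{\mu'})$, realized as one of the irreducible quotients $\overline{V}(a,b,\Gamma_{\mu'})$, whose weight spaces are finite-dimensional by the estimate $\dim\overline{V}_{a-i\mu_{1}+\mu'\cdot\kappa}<1\cdot 3\cdots(2i+1)$. Thus every non-uniformly-bounded irreducible weight module with finite-dimensional weight spaces is a \textbf{GHW} module, which settles alternative (b) and, combined with the previous paragraph, establishes the full dichotomy of the theorem.
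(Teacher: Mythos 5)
Your proposal is correct and follows essentially the same route as the paper: the paper gives no standalone proof of Theorem \ref{thmS}, presenting it precisely as the specialization to $\Gamma_\mu\cong\mathbb{Z}^n$ of the results of Su \cite{Su,Su1} and Lu--Zhao \cite{LuZhao} for generalized Virasoro algebras, which is exactly the reduction you carry out. Your handling of the uniformly bounded case via the triviality of $c_\mu$ on cuspidal modules (the theorem of Section 3.2) together with Theorem \ref{CuspWitt} also matches the reduction the paper itself sets up in Section 3.2.
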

The following theorem is a consequence of Theorem \ref{thmLZS} and Theorem \ref{thmS} (see R. Lu and K. Zhao \cite{LuZhao}, Theorem 3.9 and X. Guo, R. Lu and K.Zhao \cite{XRK} Theorem 3.9 for the general case).

\begin{thm} \label{ClssThm}
If $V$ is a nontrivial irreducible weight module with finite dimensional
weight spaces over the solenoidal-Virasoro algebra $\mathbf{Vir}(n)_{\mu}$ , then $V$ is isomorphic to one of the following modules:
\begin{itemize}
\item[1)]$V\cong T_{\mu}(a,b)$ for $(a,b)\in \mathbb{C}^2\setminus\{(0,0)\}$ or $V\cong \overline{T}_{\mu}(0,0)$.
\item[2)] $V\cong \overline{V}(a,b,\Gamma_{\mu'})$ for some $a,b\in \mathbb{C}$.
\end{itemize}
\end{thm}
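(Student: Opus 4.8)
The plan is to read the conclusion directly off the dichotomy supplied by Theorem \ref{thmS} and then, in the non-uniformly-bounded branch, to install the converse of Theorem \ref{thmLZS}: namely that every generalized highest weight module arises as one of the induced quotients $\overline{V}(a,b,\Gamma_{\mu'})$. First I would apply Theorem \ref{thmS} to $V$: since $V$ is a nontrivial irreducible weight $\mathbf{Vir}(n)_{\mu}$-module with finite-dimensional weight spaces, it is either uniformly bounded or not. In the uniformly bounded case, part a) of Theorem \ref{thmS} gives at once $V\cong T_{\mu}(a,b)$ with $(a,b)\in\mathbb{C}^2\setminus\{(0,0)\}$ or $V\cong\overline{T}_{\mu}(0,0)$, which is exactly conclusion 1). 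So the whole content lies in the other branch.

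In the non-uniformly-bounded case, part b) of Theorem \ref{thmS} gives $V\cong V(\lambda_0)$, a generalized highest weight module, and the task is to match it with some $\overline{V}(a,b,\Gamma_{\mu'})$. The approach I would take is to exploit the $\mathbb{Z}$-grading $\mathbf{Vir}(n)_{\mu}=\oplus_i\mathbf{Vir}(n)_{\mu}^{i}$ coming from the splitting $\Gamma_{\mu}=\mathbb{Z}\mu_1\oplus\Gamma_{\mu'}$. Fix a $\mathbb{Z}$-basis $(u_1,\ldots,u_n)$ of $\Gamma_\mu$ witnessing the GHW condition, chosen so that $u_1=\mu_1$ and $u_2,\ldots,u_n$ generate $\Gamma_{\mu'}$, and let $v_{\lambda_0}$ be a generalized highest weight vector annihilated by $\mathbf{Vir}(n)_{\mu}^{>0}$. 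Since the degree-zero subalgebra $\mathbf{Vir}(n)_{\mu}^{0}\cong\mathbf{Vir}(n-1)_{\mu'}$ preserves the $t_1$-layer of $v_{\lambda_0}$ and shifts $d_\mu$-weights only by elements of $\Gamma_{\mu'}$, the submodule $W:=U(\mathbf{Vir}(n)_{\mu}^{0})v_{\lambda_0}$ is a weight $\mathbf{Vir}(n-1)_{\mu'}$-module with finite-dimensional weight spaces, on which $c_\mu$ acts by a scalar, and within it $v_{\lambda_0}$ is killed by the positive directions of $\mathbf{Vir}(n-1)_{\mu'}$.

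The key step is then to identify $W$ as a tensor field module. By the rank-$(n-1)$ classification (Theorem \ref{CuspWitt}, together with the triviality of $c_\mu$ on cuspidal modules) one should force $W$ to be uniformly bounded, hence $W\cong T_{\mu'}(a,b)$ for suitable $a,b\in\mathbb{C}$; extending this action so that $\mathbf{Vir}(n)_{\mu}^{+}$ kills $v_{\lambda_0}$ realizes $V$ as a quotient of the generalized Verma module $\widetilde{V}(a,b,\Gamma_{\mu'})$, and irreducibility of $V$ then yields $V\cong\overline{V}(a,b,\Gamma_{\mu'})$, which is conclusion 2).

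I expect the genuine obstacle to be precisely this identification of the top layer $W$ as a cuspidal module with the correct parameters $(a,b)$: the naive minimality argument on $\lambda_0$ does not by itself rule out that $W$ is again a higher-rank GHW module, and controlling this requires the full structure theory of generalized highest weight modules over higher-rank (generalized) Virasoro algebras. I would therefore discharge this step by invoking Lu--Zhao \cite{LuZhao}, Theorem 3.9, together with Guo--Lu--Zhao \cite{XRK}, Theorem 3.9, for the general rank, exactly as announced immediately before the statement; combined with the uniformly bounded branch above, this exhausts the two listed classes and completes the proof.
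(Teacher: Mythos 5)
Your proposal is correct and follows essentially the same route as the paper: the paper's own proof consists of nothing more than applying the dichotomy of Theorem \ref{thmS} and citing Lu--Zhao \cite{LuZhao} (Theorem 3.9) together with Guo--Lu--Zhao \cite{XRK} (Theorem 3.9) to identify the non-uniformly-bounded modules with the induced quotients $\overline{V}(a,b,\Gamma_{\mu'})$, which is exactly what you do. Your remark that the needed ingredient is the \emph{converse} of Theorem \ref{thmLZS} --- that every \textbf{GHW} module with finite-dimensional weight spaces is of the form $\overline{V}(a,b,\Gamma_{\mu'})$, not merely the direction stated there --- is a correct sharpening of the paper's looser assertion that the theorem ``is a consequence of Theorem \ref{thmLZS} and Theorem \ref{thmS}.''
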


The following theorem is a consequence of Theorem 3.9 in \cite{LuZhao} and Theorem 3.9 in \cite{XRK}.It classifies Harish-chandra modules of $\mathbf{Vir}(n)_{\mu}.$

\begin{thm} \label{GenThm}
Let  $V$ be a nontrivial irreducible weight module with finite dimensional
weight spaces over the solenoidal-Virasoro algebra $\mathbf{Vir}(n)_{\mu}$.
\begin{itemize}
\item[1)] If $n=1$ then $\Gamma_{\mu}=\mu\mathbb{Z}\simeq\mathbb{Z}$, then $V$ is  of intermediate series or highest or lowest module (see \cite{Ma}).
\item[2)] If $n\geq 2$, then $V$ is isomorphic to one of the following modules:
\begin{itemize}
\item[a)]$V\cong T_{\mu}(a,b)$ for $(a,b)\in \mathbb{C}^2\setminus\{(0,0)\}$ or $V\cong \overline{T}_{\mu}(0,0)$.
\item[b)] $V\cong \overline{V}(a,b,\Gamma_{\mu'})$ for some $a,b\in \mathbb{C}$.
\end{itemize}

\end{itemize}
\end{thm}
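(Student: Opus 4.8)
The plan is to treat the two cases $n=1$ and $n\ge 2$ separately, reducing each to results already available rather than proving anything from scratch.

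For $n=1$ I would first make the reduction to the classical Virasoro algebra explicit. As noted just after (\ref{soloWitt}), when $n=1$ we have $\Gamma_\mu=\mu\mathbb{Z}$ and $\mathbf{W}(1)_\mu\cong\mathbf{W}(1)$ via the rescaling $\mu\mapsto 1$; by the uniqueness part of Theorem \ref{Ext1} this isomorphism lifts to the one-dimensional central extensions, so $\mathbf{Vir}(1)_\mu\cong\mathbf{Vir}$. Under this isomorphism a weight $\mathbf{Vir}(1)_\mu$-module with finite-dimensional weight spaces is exactly a Harish-Chandra $\mathbf{Vir}$-module, and the eigenspaces of $d_\mu$ correspond to those of the classical degree operator. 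Hence part 1) is precisely Kac's conjecture as proved by Mathieu \cite{Ma}: every nontrivial irreducible Harish-Chandra $\mathbf{Vir}$-module is of intermediate series, highest weight, or lowest weight. So for this case I would spend only a sentence spelling out the isomorphism and then quote \cite{Ma}.

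For $n\ge 2$ the statement is literally Theorem \ref{ClssThm}, so the task is to recall how that theorem was assembled and to check that no extra hypothesis is needed. By the earlier proposition that $c_\mu$ acts by a scalar on an irreducible Harish-Chandra module, and by the theorem that this scalar is $0$ on cuspidal modules, the module $V$ is governed by its underlying $\mathbf{W}(n)_\mu$-structure. Theorem \ref{thmS} then supplies the dichotomy: if $V$ is uniformly bounded it is one of the intermediate-series modules $T_\mu(a,b)$ with $(a,b)\neq(0,0)$, or $\overline{T}_\mu(0,0)$, giving case a); if $V$ is not uniformly bounded it is a \textbf{GHW} module. Finally Theorem \ref{thmLZS} identifies each such \textbf{GHW} module with one of the irreducible quotients $\overline{V}(a,b,\Gamma_{\mu'})$, giving case b). Combining these is exactly Theorem \ref{ClssThm}, which yields part 2).

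The genuinely hard content is not produced here: it resides in the cited classification theorems of Su \cite{Su,Su1} and Lu--Zhao \cite{LuZhao} (with \cite{XRK} for the general reduction), whose proofs furnish both the bounded/unbounded dichotomy and the identification of \textbf{GHW} modules with generalized Verma quotients. The only points I must verify in the present setting are formal, and this is where the modest obstacle lies: confirming that the hypotheses of those theorems hold for $\mathbf{Vir}(n)_\mu$ — namely that genericity of $\mu$ makes $\Gamma_\mu\cong\mathbb{Z}^n$ a rank-$n$ lattice admitting the splitting $\Gamma_\mu=\mathbb{Z}\mu_1\oplus\Gamma_{\mu'}$ used to form $\overline{V}(a,b,\Gamma_{\mu'})$ — and matching the normalization of the parameters $(a,b)$ and of the support coset $a+\Gamma_\mu$ with the conventions of \cite{LuZhao,XRK}. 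One also checks that the central charge is irrelevant once it acts by a scalar (and by zero in the cuspidal case), and that the trivial module is the only case removed by the hypothesis \emph{nontrivial}. All of this is bookkeeping rather than new mathematics.
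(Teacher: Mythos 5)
Your proposal is correct and follows essentially the same route as the paper: the paper offers no independent argument for this theorem, deducing part 1) from Mathieu \cite{Ma} via the identification $\mathbf{Vir}(1)_\mu\cong\mathbf{Vir}$ and part 2) by citing Theorem 3.9 of \cite{LuZhao} and Theorem 3.9 of \cite{XRK} (i.e., restating Theorem \ref{ClssThm}), which is exactly your assembly of Theorem \ref{thmS}, Theorem \ref{thmLZS}, and the scalar action of $c_\mu$. Your added bookkeeping (lifting the $n=1$ isomorphism through the universal central extension, checking $\Gamma_\mu=\mathbb{Z}\mu_1\oplus\Gamma_{\mu'}$) is sound and, if anything, slightly more careful than the paper itself.
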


\section{ $\mathbf{Vir}(n)_\mu$-modules with infinite dimensional weight spaces}\label{highestLowest}
   Let $\displaystyle\mathbb{Z}^{n}$ be the free abelian group of rank $n$ whose elements are sequences of $n$ integers, and operation is the addition.
A group order on $\displaystyle \mathbb{Z}^{n}$ is a total order, which is compatible with addition, that is
$$ a<b\quad {\text{ if and only if }}\quad a+c<b+c.$$
The lexicographical order $<_{lex}$ is a group order on $\mathbb{Z}^{n}$

We transport the lexicographic order $<_{lex}$ on $\mathbb{Z}^{n}$ to $\Gamma_\mu$ that is
$$\mu\cdot\alpha\prec \mu\cdot\beta \hbox{ if and only if } \alpha<_{lex} \beta.$$

Let us introduce
$\Gamma_\mu^+:=\{\mu\cdot\alpha| \overrightarrow{0}<_{lex}\alpha\}$ and $\Gamma_\mu^-:=\{\mu\cdot\alpha| \alpha<_{lex}\overrightarrow{0}\}$
and let us denote:
$$(\mathbf{W}(n)_{\mu})_{\pm}:=\displaystyle \oplus_{\mu\cdot\alpha\in\Gamma_\mu^{\pm} }\mathbb{C}e_{\mu\cdot\alpha}$$
 then we have:
$$\mathbf{W}(n)_{\mu}=(\mathbf{W}(n)_{\mu})_+\oplus(\mathbf{W}(n)_{\mu})_0\oplus (\mathbf{W}(n)_{\mu})_{-}$$
where $(\mathbf{W}(n)_{\mu})_0=\mathbb{C}d_\mu.$

We deduce the following triangular decomposition of  $\mathbf{Vir}(n)_{\mu}$:
$$\mathbf{Vir}(n)_{\mu}=(\mathbf{Vir}(n)_{\mu})_+\oplus(\mathbf{Vir}(n)_{\mu})_0\oplus (\mathbf{Vir}(n)_{\mu})_{-}$$
where, $(\mathbf{Vir}(n)_{\mu})_0=\mathbb{C}d_\mu\oplus \mathbb{C}c_\mu$ is the Cartan subalgebra (CSA) and $(\mathbf{Vir}(n)_{\mu})_{\pm}=(\mathbf{W}(n)_{\mu})_{\pm}$.

The root decomposition of $\mathbf{Vir}(n)_{\mu}$ with respect to the CSA  $(\mathbf{Vir}(n)_{\mu})_0$ is the following:
$$\mathbf{Vir}(n)_{\mu}=\displaystyle\oplus_{\mu\cdot\alpha\in \Gamma_\mu}\mathbf{Vir}(n)_{\mu\cdot\alpha}$$
where $\mathbf{Vir}(n)_{\mu\cdot\alpha}=\mathbb{C}e_{\mu\cdot\alpha}.$ Moreover, we have:
$$(\mathbf{Vir}(n)_{\mu})_{+}=\displaystyle\oplus_{\mu\cdot\alpha\in\Gamma_\mu^+}\mathbf{Vir}(n)_{\mu\cdot\alpha}\hbox{ and } (\mathbf{Vir}(n)_{\mu})_-=\displaystyle\oplus_{\mu\cdot\alpha\in \Gamma_\mu^-}\mathbf{Vir}(n)_{\mu\cdot\alpha}.$$

A $\mathbf{Vir}(n)_{\mu}$-module $V$  is called a weight module, if $V=\oplus_{(\lambda,c)\in\mathbb{C}^2}V_{\lambda,c}$, where: $$V_{\lambda,c} := \{v \in V |d_{\mu}.v = \lambda v, c_\mu.v =cv\}.$$ A $(\lambda,c)$ is called a weight of $V$ if $V_{\lambda,c}\neq \{0\}$.
When $C$ acts as a scalar $c$ on the whole module $V$ , we shall simply write
$V_{\lambda}$ instead of $V_{\lambda,c}$.

In the rest of this section, all modules considered are such modules.
A $\mathbf{Vir}(n)_\mu$-module V is called a weight module if V is the sum of its weight spaces
. For a weight module $V$ , we define $suppV := \{\lambda\in \mathbb{C} \backslash V_{\lambda}\neq 0\}$, which is
generally called the weight set (or the support) of $V$ . Given a weight module $V$, we denote $V =\bigoplus_{\lambda\in\mathbb{C}}V_\lambda$, where
$V_\lambda = 0$ for $\lambda~\not\in~suppV.$
Let $V$ be a module and $W'\subset  W$ are submodules of $V$ . The module $W/W'$
is called a sub-quotient of $V$ . If $W' = 0$ we consider that $W = W/W'$.

Let $V$ be a weight module over $\mathbf{Vir}(n)_\mu$. A vector $v \in V_{\lambda,c}, \lambda \in suppV , c \in \mathbb{C}$, is called a
highest weight  (resp. lowest weight) vector if $(\mathbf{Vir}(n)_\mu)_{+}v=0$ (resp. $(\mathbf{Vir}(n)_\mu)_{-}v=0$).
$V$ is called a highest weight (resp. lowest weight) module with highest weight (resp.
lowest weight) $(\lambda, c)$ if there exists a nonzero highest (lowest, resp.) weight vector $v \in V_{\lambda,c}$
such that $V$ is generated by $v$.
Let $\lambda\in \Gamma_\mu, c\in \mathbb{C}$. Let us consider a one dimensional module $\mathbb{C}_{\lambda,c}$ over $\mathfrak{b}^+:=(\mathbf{Vir}(n)_\mu)_{0}\oplus(\mathbf{Vir}(n)_\mu)_{+}$ such
that $(\mathbf{Vir}(n)_\mu)_{+}$ acts by zero, $d_\mu$ is a multiplication by $\lambda$ and $C$ is a multiplication by
$c$. Verma module $M(\lambda,c)$ over the solenoidal Virasoro algebra is by definition an induced
module from $\mathbb{C}_{\lambda,c}$
$$M(\lambda,c) = Ind^{\mathbf{Vir}(n)_\mu}_{\mathfrak{b}^+}\mathbb{C}_{\lambda,c}.$$
We have a natural inclusion of $\mathbb{C}_{\lambda,c}\hookrightarrow M(\lambda,c).$ So we have a vector $v\in M(\lambda,c)$
corresponding to $1\in \mathbb{C}_{\lambda,c}$ . Sometimes we will write $v_{\lambda,c}$ to stress that this
vector lies in $M(\lambda,c).$ The vector $v$ is called the vacuum vector.
The enveloping algebra $U((\mathbf{Vir}(n)_{\mu})_-)$ is graded by its weight subspaces $U((\mathbf{Vir}(n)_{\mu})_-)_{\mu\cdot\alpha}$, that is $$U((\mathbf{Vir}(n)_{\mu})_-)=\oplus_{\alpha\in \mathbb{Z}^n,\alpha<_{lex} \overrightarrow{0}}U((\mathbf{Vir}(n)_{\mu})_-)_{\mu\cdot\alpha}.$$

Let us make a few remarks about Verma modules. First, any Verma
module $M(\lambda,c)$ is a free module over $U((\mathbf{Vir}(n)_\mu)_{-})$. Therefore, we have the following
basis in $M(\lambda,c)$:
$$e_{\mu\cdot\alpha_k}e_{\mu\cdot\alpha_{k-1}}\ldots e_{\mu\cdot\alpha_2}e_{\mu\cdot\alpha_1} v_{\lambda,c}$$
where $\mu\cdot\alpha_i\in \Gamma_\mu^-$ for all $i=1,\ldots,k$.
The operator $d_\mu$ acts semi-simply on  $M(\lambda,c)$. We can consider the eigenspace
decomposition of  $M(\lambda,c)$,
$$M(\lambda,c) =\displaystyle\oplus_{\mu\cdot\alpha\in \Gamma_\mu^-\cup\{\overrightarrow{0}\}}M(\lambda,c)_{\lambda+\mu\cdot\alpha}.$$
It is easy to see that this
decomposition respects the grading on $\mathbf{Vir}(n)_\mu$.
 We say that vector $w\in M(\lambda,c)$ has level $\mu\cdot\alpha\in \Gamma_\mu^+$ if $w\in M(\lambda,c)_{\lambda-\mu\cdot\alpha}$. A vector $w$ is called singular if it has some level $\mu.\alpha\in \Gamma_\mu^+$ and $(\mathbf{Vir}(n)_{\mu})_+$ acts
by zero on this vector. It is obvious that any singular vector generates a
submodule isomorphic to Verma module. If a singular vector has level $\mu\cdot\alpha\in\Gamma_\mu^+$
then it generates $M(\lambda-\mu\cdot\alpha,c)$.
Let $\overline{M(\lambda,c)}$ be the maximal proper submodule of $M(\lambda,c)$. Then the quotient $$V(\lambda,c):=M(\lambda,c)/\overline{M(\lambda,c)}$$
is an irreducible highest weight module of $\mathbf{Vir}(n)_{\mu}$ and every highest irreducible module is constructed with this manner.

We can construct irreducible lowest weight modules as follows. We consider trivial modules  $\mathbb{C}_{\lambda,c}$ over
$\mathfrak{b}^-:=(\mathbf{Vir}(n)_\mu)_{0}\oplus(\mathbf{Vir}(n)_\mu)_{-}$ where elements of $(\mathbf{Vir}(n)_\mu)_{-}$ act trivially. Then we consider the induced module
$$M(\lambda,c)^{\vee} =Ind^{\mathbf{Vir}(n)_\mu}_{\mathfrak{b}^{-}}\mathbb{C}_{\lambda,c}.$$
This module has an irreducible quotient denoted by $V(\lambda,c)^{\vee}$ and which is an irreducible lowest module. Moreover every irreducible lowest weight
module is constructed with this manner.
\begin{thm}
Let $V(\lambda,c)$  be the  irreducible highest weight module of $\mathbf{Vir}(n)_\mu$ constructed above, then there exists $\alpha\in supp(V(\lambda,c))$ such that $V(\lambda,c)_\alpha$ is an infinite dimensional  weight subspace of $V(\lambda,c)$.

We have the same assertion for the lowest weight module $V(\lambda,c)^{\vee}$.
\end{thm}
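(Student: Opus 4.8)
The plan is to argue by contradiction, leaning on the classification of Harish-Chandra modules already in hand (Theorem~\ref{ClssThm}). Throughout I assume $n\geq 2$: for $n=1$ the order $\prec$ is the usual order on $\mu\mathbb{Z}$ and $V(\lambda,c)$ is an ordinary Virasoro highest weight module with finite dimensional weight spaces, so the phenomenon is genuinely one of higher rank. I also take $V=V(\lambda,c)$ to be nontrivial (the trivial one dimensional module being the degenerate case the statement tacitly excludes), so that $V$ is infinite dimensional. First I would record two structural facts. Since $V$ is generated by a highest weight vector $v$ over $U((\mathbf{Vir}(n)_\mu)_-)$, every weight is of the form $\lambda+u$ with $u\in\Gamma_\mu^-\cup\{\overrightarrow 0\}$, so $\mathrm{Supp}(V)\subseteq\Gamma_\mu$ and every element of the support is $\preceq\lambda$. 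Because $\lambda\in\mathrm{Supp}(V)$, the weight $\lambda$ is the \emph{maximum} of $\mathrm{Supp}(V)$ for the order $\prec$; moreover $d_\mu$ acts semisimply and $V$ is a weight module.

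Now suppose, for contradiction, that every weight space of $V$ is finite dimensional. Then $V$ is a nontrivial irreducible Harish-Chandra module, so Theorem~\ref{ClssThm} applies: $V$ is isomorphic to a tensor module $T_\mu(a,b)$ with $(a,b)\neq(0,0)$, to $\overline{T}_\mu(0,0)$, or to a generalized highest weight module $\overline{V}(a,b,\Gamma_{\mu'})$. Since a module isomorphism preserves $d_\mu$- and $c_\mu$-eigenvalues, it preserves the support exactly; hence it suffices to show that none of these modules has a greatest weight for $\prec$, which will contradict the previous paragraph. For $T_\mu(a,b)$ and $\overline{T}_\mu(0,0)$ this is immediate, as their supports are $a+\Gamma_\mu$ and $\Gamma_\mu\setminus\{\overrightarrow 0\}$, while $(\Gamma_\mu,\prec)\cong(\mathbb{Z}^n,<_{lex})$ has no greatest element.

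I expect the remaining case $\overline{V}(a,b,\Gamma_{\mu'})$ to be the main obstacle, precisely because these modules are themselves highest-weight-like and do have finite dimensional weight spaces. The resolution is that they are highest weight only for the smaller cone $\mathbb{Z}^+u_1\oplus\cdots\oplus\mathbb{Z}^+u_n$ spanned by a $\mathbb{Z}$-basis, not for the full lexicographic cone $\Gamma_\mu^+$. Concretely I would use that, by the construction recalled above, the top first-coordinate level of $\overline{V}(a,b,\Gamma_{\mu'})$ is a copy of $T_{\mu'}(a,b)$ sitting inside $\overline{V}$ (the maximal submodule $J$ meets $T_{\mu'}(a,b)$ trivially), whose support is a full coset of $\Gamma_{\mu'}$. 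Since $\prec$ restricted to weights with a fixed first coordinate is exactly the lexicographic order on $\mathbb{Z}^{n-1}$, this coset has no greatest element, and therefore neither does $\mathrm{Supp}(\overline{V}(a,b,\Gamma_{\mu'}))$. This gives the contradiction, and so some weight space $V(\lambda,c)_\alpha$ must be infinite dimensional.

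Finally, for the lowest weight module $V(\lambda,c)^\vee$ I would run the mirror-image argument: it has a \emph{minimum} weight $\lambda$ for $\prec$, whereas each module in the classification has support unbounded below (the relevant cosets are unbounded below, and $\overline{V}(a,b,\Gamma_{\mu'})$ occupies infinitely many negative first-coordinate levels), so again no classified Harish-Chandra module can be isomorphic to it. Equivalently, one may transport the highest weight result through the anti-involution $\omega(e_{\mu\cdot\alpha})=e_{\mu\cdot(-\alpha)}$, $\omega(c_\mu)=c_\mu$, which interchanges $(\mathbf{Vir}(n)_\mu)_+$ and $(\mathbf{Vir}(n)_\mu)_-$ and hence carries highest weight modules to lowest weight modules.
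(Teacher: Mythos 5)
Your proof is correct (on the same implicit scope as the paper's own argument: $n\geq 2$ and $V(\lambda,c)$ nontrivial, both of which you, unlike the paper, state explicitly), but it takes a genuinely different route. The paper also argues by contradiction through the classification Theorem~\ref{ClssThm}, but its discriminating invariant is \emph{uniform boundedness of layers}: following Claim 2 of Lemma 3.3 in Lu--Zhao, the slice $W=\oplus_{\kappa\in\lambda+\Gamma_{\mu'}}V(\lambda,c)_{\kappa}$ of a \textbf{GHW} module would be a uniformly bounded $\mathbf{Vir}(n-1)_{\mu'}$-module, whereas $W$ contains the highest weight $\mathbf{Vir}(n-1)_{\mu'}$-submodule $U(\mathbf{Vir}(n-1)_{\mu'})\,v_{\lambda,c}$, which is not uniformly bounded --- contradiction. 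Your discriminating invariant is instead order-theoretic: $V(\lambda,c)$ has a $\prec$-greatest weight, while no module in the classification does; for the tensor modules because their support is a full coset (or a coset minus a point) of $\Gamma_{\mu}$, and for $\overline{V}(a,b,\Gamma_{\mu'})$ because its support meets the top coset $a+\Gamma_{\mu'}$ in a full $\Gamma_{\mu'}$-coset (since $J(a,b,\Gamma_{\mu'})\cap T_{\mu'}(a,b)=0$), which has no lex-greatest element. What your route buys is elementarity and self-containedness: you never invoke the Lu--Zhao slice estimate, nor the (itself nontrivial) fact that highest weight modules over the rank-$(n-1)$ algebra are unbounded; you only compare supports. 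What the paper's route buys is structural information about layers of \textbf{GHW} modules that is reusable in the broader classification machinery of \cite{LuZhao}. Three small soft spots to patch in your write-up: (i) in the $\overline{V}$ case, state explicitly that every weight outside the top coset is $\prec$ every weight inside it (a superset of a set with no greatest element can still have a greatest element --- here this is immediate because those weights have strictly negative first coordinate); (ii) in the lowest-weight mirror argument, your claim that $\overline{V}(a,b,\Gamma_{\mu'})$ occupies infinitely many negative levels is asserted rather than proved, so a least weight could a priori hide in a deepest occupied level --- your alternative via the symmetry exchanging $(\mathbf{Vir}(n)_{\mu})_{\pm}$ is the clean fix; (iii) for that fix, note that $\omega(e_{\mu\cdot\alpha})=e_{\mu\cdot(-\alpha)}$, $\omega(c_\mu)=c_\mu$ is an \emph{anti}-automorphism, so one should twist modules by the genuine automorphism $e_{\mu\cdot\alpha}\mapsto -e_{\mu\cdot(-\alpha)}$, $c_\mu\mapsto -c_\mu$, which carries the irreducible highest weight module of highest weight $(\lambda,c)$ to the irreducible lowest weight module of lowest weight $(-\lambda,-c)$ --- harmless, since the statement quantifies over all $(\lambda,c)$. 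All three are one-line repairs; note also that the paper itself offers no argument at all for the lowest weight half of the statement, so your treatment there is already more complete than the original.
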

\begin{proof}To prove this theorem, we consider first the case $\Gamma_{\mu} \simeq\mathbb{Z}^2$.
The statement of the theorem is a consequence of Corollary 3.4 in \cite{LuZhao}, where R. Lu and K. Zhao proved it for generalized Virasoro algebra in general.
Now, if  $\Gamma_{\mu} \simeq\mathbb{Z}^n,n\geq 2$. Assume that $dim(V(\lambda,c))_{\kappa}$ finite dimensional for any $\kappa\in supp(V(\lambda,c))$. By its construction $supp(V(\lambda,c))\subset \lambda-\Gamma_{\mu}^+$. But  $\Gamma_{\mu}^+=\mu_1\mathbb{N}+\Gamma_{\mu'}$.

Let $B_{\varepsilon}=(\varepsilon_1,\ldots,\varepsilon_n)$ be the canonical basis of $\mathbb{Z}^n$ considered as $\mathbb{Z}$-module and let $B=(e_1,\ldots,e_n)$ the basis of $\Gamma_{\mu}$ image of the basis $B_{\varepsilon}$ by the isomorphism $\alpha\mapsto \mu\cdot\alpha$.
Then $\Gamma_{\mu}=\mathbb{Z}e_1\oplus \Gamma_{\mu'}$.

We have $\mu_1\mathbb{N}+\Gamma_{\mu'}\cap supp(V(\lambda,c))=\emptyset$ and $\lambda+\Gamma_{\mu'}\cap supp(V(\lambda,c))\neq \emptyset$.

Using the same argument as in the proof of Claim 2 of Lemma 3.3 in \cite{LuZhao}, the space
$$W = \oplus_{\kappa\in \lambda+\Gamma_{\mu'}}V(\lambda,c)_{\kappa}$$
is a uniformly bounded $\mathbf{Vir}(n-1)_{\mu'}$-module and $V(\lambda,c)$ will be a \textbf{GHM} by Theorem \ref{ClssThm}.
But the module $W$ contains the submodule $U(\mathbf{Vir}(n-1)_{\mu'}).v_{\lambda,c}$ which is a highest weight module with highest weight $(\lambda,c)$ which
is not uniformly bounded. A contradiction. Hence $V(\lambda,c)$ contains an infinite dimensional weight subspace.
\end{proof}

\end{document}